\newtheorem{thm}{Theorem}[section]
\newtheorem{prop}[thm]{Proposition}
\newtheorem{defi}[thm]{Definition}
\newtheorem{cor}[thm]{Corollary}
\renewcommand\section{\@startsection {section}{1}{\z@}%
{-3.5ex \@plus -1ex \@minus -.2ex}%
{1.3ex \@plus.2ex}%
{\center\small\sc\MakeTextUppercase}}
\def\subsection#1{\@startsection {subsection}{2}{0pt}%
{-3.5ex \@plus -1ex \@minus -.2ex}%
{1ex \@plus.2ex}%
{\bf\mathversion{bold}}{#1}}
\def\subsubsection#1{\@startsection{subsubsection}{3}{0pt}%
{\medskipamount}%
{-10pt}%
{\normalsize\itshape}{\kern-2.2ex. #1.}}
\newcommand{\law}{\mathscr{L}}
\newcommand{\D}{\mathbb{D}}
\newcommand{\E}{\mathbb{E}}
\newcommand{\R}{\mathbb{R}}
\newcommand{\N}{\mathbb{N}}
\newcommand{\Z}{\mathbb{Z}}
\newcommand{\LL}{\textbf{L}}
\newcommand{\Var}{\mathrm{Var}}
\numberwithin{equation}{section}
\begin{document}

\title{\sc\bf\large\MakeUppercase{
Stein meets Malliavin in normal approximation
}}

\author{\sc Louis H. Y. Chen \\ \it National University of Singapore}

\def\parsedate #1:20#2#3#4#5#6#7#8\empty{20#2#3-#4#5-#6#7}
\def\moddate{\expandafter\parsedate\pdffilemoddate{\jobname.tex}\empty}
\date{\moddate}

\maketitle

\abstract

Stein's method is a method of probability approximation which hinges on the solution of a functional equation.  For normal approximation the functional equation is a first order differential equation. Malliavin calculus is an infinite-dimensional differential calculus whose operators act on functionals of general Gaussian processes.  Nourdin and Peccati (2009) established a fundamental connection between Stein's method for normal approximation and Malliavin calculus through integration by parts.  This connection is exploited to obtain error bounds in total variation in central limit theorems for functionals of general Gaussian processes. Of particular interest is the fourth moment theorem which provides error bounds of the order $\sqrt{\mathbb{E}(F_n^4)-3}$ in the central limit theorem for elements $\{F_n\}_{n\ge 1}$ of Wiener chaos of any fixed order such that $\mathbb{E}(F_n^2) = 1$. This paper is an exposition of the work of Nourdin and Peccati with a brief introduction to Stein's method and Malliavin calculus. It is based on a lecture delivered at the Annual Meeting of the Vietnam Institute for Advanced Study in Mathematics in July 2014.

\section{Introduction}

Stein's method was invented by Charles Stein in the 1960's when he used his own approach in class to prove a combinatorial central limit theorem of Wald and Wolfowitz \cite{W44} and of Hoeffding \cite{H51}. Malliavin calculus was developed by Paul Malliavin \cite{M78} in 1976 to provide a probabilistic proof of the H\"{o}rmander criterion (H\"{o}rmander \cite{H67}) of hypoellipticity. Although the initial goals of Stein's method and Malliavin calculus are different, they are both built on some integration by parts techniques. This connection was exploited by Nourdin and Peccati \cite{NP09} to develop a theory of normal approximation on infinite-dimensional Gaussian spaces. They were motivated by a remarkable discovery of Nualart and Peccati \cite{NP05}, who proved that a sequence of random variables in a Wiener chaos of a fixed order converges in distribution to a Gaussian random variable if and only of their second and fourth moments converge to the corresponding moments of the limiting random variable. By combining Stein's method and Malliavin calculus, Nourdin and Peccati \cite{NP09} obtained a general total variation bound in the normal approximation for functionals of Gaussian processes. They also proved that for $\{F_n\}$ in a Wiener chaos of fixed order such that $\E (F_n^2) = 1$, the error bound is of the order $\sqrt{\E(F_n^4) - 3}$, thus providing an elegant rate of convergence for the remarkable result of Nualart and Peccati \cite{NP05}. We call this result of Nourdin and Peccati \cite{NP09} {\it{the fourth moment theorem}}.
\medskip

The work of Nourdin and Peccati \cite{NP09} has added a new dimension to Stein's method. Their approach of combining Stein's method with Malliavin calculus has led to improvements and refinements of many results  in probability theory, such as the Breuer-Major theorem \cite{BM83}. More recently, this approach has been successfully used to obtain central limit theorems in stochastic geometry, stochastic calculus, statistical physics, and for zeros of random polynomials. It has also been extended to different settings as in non-commutative probability and Poisson chaos. {Of particular interest is the connection between the Nourdin-Peccati analysis and information theory, which was recently revealed in Ledoux, Nourdin and Peccati \cite{Le15} and in Nourdin, Peccati and Swan \cite{NOS13}. 
\medskip

This paper is an exposition on the connection between Stein's method and Malliavin calculus and on how this connection is exploited to obtain a general error bound in the normal approximation for functionals of Gaussian processes, leading to the proof of the fourth moment theorem with some applications. It is an expanded version of the first four sections and of part of section 5 of Chen and Poly \cite{CP14}, with most parts rewritten and new subsections added.

\section{Stein's method}
\subsection{A general framework}

Stein's method is a method of probability approximation introduced by
Charles Stein \cite{S72} in 1972. It does not involve Fourier
analysis but hinges on the solution of a functional equation. Although Stein's 1972 paper was on normal approximation, his ideas were general and applicable to other probability approximations. 

In a nutshell, Stein's method can be described as follows. Let $W$ and $Z$ be random elements taking values in a space $\mathcal{S}$ and let $\mathcal{X}$ and $\mathcal{Y}$ be
some classes of real-valued functions defined on $\mathcal{S}$. In approximating the distribution $\law(W)$ of $W$ by the distribution $\law(Z)$ of $Z$, we
write $\E h(W) -  \E h(Z) = \E Lf_h(W)$ for a test function $h\in \mathcal{Y}$,
where $L$ is a linear operator ({\it Stein operator}) from $\mathcal{X}$ into $\mathcal{Y}$ and $f_h \in \mathcal{X}$ a
solution of the equation 
\begin{equation}
  Lf = h - \E h(Z)\qquad {\text{({\it Stein equation})}.}
\end{equation}

The error
$\E Lf_h(W)$ can then be bounded by studying the solution $f_h$ and
exploiting the probabilistic properties of $W$. The operator $L$ characterizes $\law(Z)$  in the sense that $\law(W) = \law(Z)$ if and only if for a
sufficiently large class of functions $f$ we have
\begin{equation}
  \E Lf(W) = 0 \qquad {\text{({\it Stein identity})}.}
\end{equation}

In normal approximation, where $\law(Z)$ is the standard normal distribution, the operator used by Stein \cite{S72} is given by $Lf(w) = f'(w) - wf(w)$ for $w\in \mathbb{R}$, and in Poisson approximation, where $\law(Z)$ is the Poisson distribution with mean $\lambda > 0$, the operator $L$ used by Chen \cite{C75} is given by $Lf(w) = \lambda f(w + 1) - wf(w)$ for $w \in
\mathbb{Z}_+$.  However the operator $L$ is not unique even for the
same approximating distribution but depends on the problem at hand. For
example, for normal approximation $L$ can also be taken to be the generator of the Ornstein-Uhlenbeck process, that is, $Lf(w) = f''(w) - wf'(w)$, and for Poisson approximation, $L$ taken to be the generator of an immigration-death process, that is, $Lf(w) = \lambda[f(w + 1) - f(w)] + w[f(w - 1) - f(w)]$. This generator approach, which is due to Barbour \cite{B88}, allows
extensions to multivariate and process settings. Indeed, for multivariate normal approximation, $Lf(w) = \Delta f(w) - w\cdot
\nabla f(w)$, where $f$ is defined on the Euclidean space; see Barbour \cite{B90} and G\"otze \cite{G91}.
\medskip

Examples of expository articles and books on Stein's method for normal, Poisson and other probability approximations are Arratia, Goldstein and Gordon \cite{A90}, Chatterjee, Diaconis, and Meckes \cite{C05}, Barbour and Chen \cite{BC05a}, Barbour, Holst and Janson \cite{BHJ92}, Chen, Goldstein and Shao \cite{C11}, Chen and R\"ollin \cite{C13a}, Diaconis and Holmes \cite{D04}, and Ross \cite{R11}.

\subsection{Normal approximation}

In his 1986 monograh \cite{S86}, Stein proved the following characterization of the normal distribution.

\begin{prop} \label{prop-characterization} The following are equivalent.\\
(i) $W \sim \mathcal{N}(0,1)$;\\
(ii) $\E[f'(W) - Wf(W)] = 0$  for all $f \in \mathcal{C}_B^1$. 
\end{prop}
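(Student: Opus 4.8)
The plan is to prove the two implications separately, with $(i)\Rightarrow(ii)$ being a routine integration by parts and $(ii)\Rightarrow(i)$ requiring a characterization argument via the Stein equation.

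For $(i)\Rightarrow(ii)$, suppose $W\sim\mathcal{N}(0,1)$, so $W$ has density $\varphi(w) = \frac{1}{\sqrt{2\pi}}e^{-w^2/2}$. The key observation is that $\varphi'(w) = -w\varphi(w)$. Then for $f\in\mathcal{C}_B^1$ I would write
\[
\E[Wf(W)] = \int_{\R} wf(w)\varphi(w)\,dw = -\int_{\R} f(w)\varphi'(w)\,dw,
\]
and integrate by parts. The boundary terms vanish because $f$ and $f'$ are bounded while $\varphi$ and $\varphi'$ decay to $0$ at $\pm\infty$, yielding $\E[Wf(W)] = \int_{\R} f'(w)\varphi(w)\,dw = \E[f'(W)]$. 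One should check that $\E[f'(W)]$ and $\E[Wf(W)]$ are finite, which follows from boundedness of $f'$ and from $\E|W|<\infty$ together with boundedness of $f$.

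For $(ii)\Rightarrow(i)$, the idea is to feed into the hypothesis the solutions of the Stein equation $f'(w) - wf(w) = h(w) - \E h(Z)$ where $Z\sim\mathcal{N}(0,1)$. Fix a bounded continuous (or bounded measurable, smoothly approximated) test function $h$, and let $f_h$ be the bounded solution
\[
f_h(w) = e^{w^2/2}\int_{-\infty}^{w}\bigl(h(x) - \E h(Z)\bigr)e^{-x^2/2}\,dx.
\]
One needs to verify that $f_h\in\mathcal{C}_B^1$: boundedness of $f_h$ and $f_h'$ follows from standard estimates (the normalizing subtraction of $\E h(Z)$ is exactly what forces $f_h$ to stay bounded at $+\infty$; boundedness at $-\infty$ is automatic). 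Then hypothesis (ii) applied to $f_h$ gives $\E[h(W)] - \E[h(Z)] = \E[f_h'(W) - Wf_h(W)] = 0$, so $\E h(W) = \E h(Z)$ for all such $h$, which forces $\law(W) = \law(Z) = \mathcal{N}(0,1)$.

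The main obstacle is the regularity check on the Stein solution $f_h$ — confirming that it and its derivative are bounded so that it is a legitimate test function in (ii). This is the standard lemma on the Stein equation for the normal distribution; I would either cite it or sketch the bound $\|f_h\|_\infty \le \sqrt{\pi/2}\,\|h - \E h(Z)\|_\infty$ and $\|f_h'\|_\infty \le 2\|h - \E h(Z)\|_\infty$, obtained by splitting the integral at $w=0$ and using $\int_{-\infty}^{w}e^{-x^2/2}\,dx \le \min(\sqrt{2\pi}, \frac{1}{|w|}e^{-w^2/2})$-type estimates. Since the class $\mathcal{C}_B^1$ only requires a bounded derivative (not Lipschitz derivative), $h$ should be taken Lipschitz or be smoothed first so that $f_h'$ is genuinely continuous and bounded; approximating general bounded continuous $h$ by Lipschitz functions then recovers equality of distributions.
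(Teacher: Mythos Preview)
Your proposal is correct and follows essentially the same route as the paper: integration by parts for $(i)\Rightarrow(ii)$, and for $(ii)\Rightarrow(i)$ solving the Stein equation, checking $f_h\in\mathcal{C}_B^1$ via the standard tail estimate, and concluding $\E h(W)=\E h(Z)$ for all bounded continuous $h$. Your final worry about needing Lipschitz $h$ is unnecessary, since for $h\in\mathcal{C}_B$ the identity $f_h'(w)=wf_h(w)+h(w)-\E h(Z)$ already shows $f_h'$ is continuous once $f_h$ is; the paper simply takes $h\in\mathcal{C}_B$ directly.
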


\begin{proof}
By integration by parts, (i) implies (ii). If (ii) holds, solve
\begin{equation} \label{Stein-equation}
f'(w) - wf(w) = h(w) - \E h(Z)
\end{equation}
where $h\in \mathcal{C}_B$ and $Z \sim \mathcal{N}(0,1)$.  Its solution $f_h$ is given by
\begin{eqnarray} \label{Stein-eqn-soln} \notag
f_h(w) &=& - e^{\frac{1}{2}w^2}\int_w^\infty e^{-\frac{1}{2}t^2}[h(t)-\E h(Z)]dt \\ 
&=& e^{\frac{1}{2}w^2}\int_{-\infty}^w e^{-\frac{1}{2}t^2}[h(t)-\E h(Z)]dt.
\end{eqnarray}
Using $\int_w^\infty e^{-\frac{1}{2}t^2}dt \le w^{-1}e^{-\frac{1}{2}w^2}$ for $w > 0$, we can show that $f_h \in \mathcal{C}_B^1$ with $\|f_h\|_\infty \le \sqrt{2\pi e}\|h\|_\infty$ and $\|f'_h\|_\infty \le 4\|h\|_\infty$. Substituting $f_h$ for $f$ in (ii) leads to
$$
\E h(W) = \E h(Z) \quad \text{for} \quad h \in \mathcal{C}_B.
$$
This proves (i).
\end{proof}

The proof of Propostion \ref{prop-characterization} shows that the Stein operator $L$ for normal approximation, which is given by $Lf(w) = f'(w) - wf(w)$, is obtained by integration by parts.

Assume $\E W = 0$ and $\mathrm{Var}(W)= B^2 >0$. By Fubini's theorem, for $f $ absolutely continuous for which the expectations exist, we have

\begin{equation*}
\E Wf(W) = \int_{-\infty}^\infty f'(x)\E WI(W > x)dx = B^2\E f'(W^*)
\end{equation*}
where $\law(W^*)$ is absolutely continuous with density given by $B^{-2}\E WI(W>x)$. The distribution $\law(W^*)$ is called {\it $W$-zero-biased}. The notion of zero-biased distribution was introduced by Goldstein and Reinert \cite{GR97}. 

Now assume $\mathrm{Var}(W)= 1$. By Propoition \ref{prop-characterization}, $\law(W) = \mathcal{N}(0,1)$ if and only if $\law(W^*) = \law(W)$. Heuristically this suggests that $\law(W^*)$ is "close" to $\law(W)$ if and only if $\law(W)$ is "close" to $\mathcal{N}(0,1)$. Therefore, it is natural to ask if can we couple $W^*$ with $W$ in such a way that $\E|W^* - W|$ provides a good measure of the  distance between $\law(W)$ and $\mathcal{N}(0,1)$? There are three distances commonly used for normal approximation.

\begin{defi}\label{def-distances}
Let $Z \sim \mathcal{N}(0,1)$, $F(x)=P(W\le x)$ and $\Phi(x)=P(Z \le x)$. \\
(i) The Wasserstein distance between $\law(W)$ and $\mathcal{N}(0,1)$ is defined by
$$
d_{\mathrm{W}}(\law(W), \mathcal{N}(0,1)):=\sup_{|h(x)-h(y)|\le |x-y|}|\E h(W)-\E h(Z)|.
$$
(ii) The Kolmogorov distance between $\law(W)$ and $\mathcal{N}(0,1)$ is defined by
$$
d_{\mathrm{K}}(\law(W), \mathcal{N}(0,1)) := \sup_{x \in \R}|F(x) - \Phi(x)|.
$$
(iii) The total variation distance between $\law(W)$ and $\mathcal{N}(0,1)$ is defined by
\begin{eqnarray*}
d_\mathrm{TV}(\law(W),\mathcal{N}(0,1))&:=& \sup_{A \in \mathcal{B}(\R)}|P(W\in A) - P(Z\in A)| \\
&=& \frac{1}{2}\sup_{|h|\le 1}|\E h(W)- \E h(Z)|.
\end{eqnarray*}
\end{defi}

Note that $\E h(W) - \E h(Z) = \E(h(W) - h(0)) - \E(h(Z) - h(0))$. So
$$
d_{\mathrm{W}}(\law(W), \mathcal{N}(0,1)) = \sup_{|h(x)-h(y)|\le |x-y|, h(0) =0}|\E h(W)-\E h(Z)|.
$$
Also note that $|h(w)| \le |h(w)-h(0)|+|h(0)|\le |w|+|h(0)|$. So $|h(x) - h(y| \le |x-y|$ implies that $h$ grows linearly. Since $\mathcal{C}^1$ functions $h$ with $\|h'\|_\infty \le 1$ is dense in the sup norm in the class of functions $h$ with $|h(x)-h(y)| \le |x-y|$, we also have
$$
d_{\mathrm{W}}(\law(W), \mathcal{N}(0,1))=\sup_{h \in \mathcal{C}^1, \|h'\|_\infty \le~1}|\E h(W)-\E h(Z)|.
$$
By an application of Lusin's theorem,
$$
\sup_{|h|\le 1}|\E h(W) - \E h(Z)| = \sup_{h \in \mathcal{C}, |h| \le 1} |\E h(W) - \E h(Z)|.
$$
Therefore,
$$
d_\mathrm{TV}(\law(W),\mathcal{N}(0,1))= \frac{1}{2}\sup_{h \in \mathcal{C}, |h| \le 1} |\E h(W) - \E h(Z)|.
$$

The proposition below concerns the boundedness properties of the solution $f_h$, given by (\ref{Stein-eqn-soln}), of the Stein equation (\ref{Stein-equation}) for $h$ either bounded or absolutely continuous with bounded $h'$. The use of these boundedness properties is crucial for bounding the distances defined in Definition \ref{def-distances}. 

\begin{prop} \label{prop-Stein-eqn-soln}
Let $f_h$ be the unique solution, given by (\ref{Stein-eqn-soln}), of the Stein equation (\ref{Stein-equation}), where $h$ is either bounded or absolutely continuous.

1. If $h$ is bounded, then
\begin{equation}\label{bd-bounded}
\|f_h\|_{\infty} \le \sqrt {2\pi}\|h\|_{\infty}, ~~\|f'_h\|_{\infty} \le 
4\|h\|_{\infty}.
\end{equation}

2. If $h$ is absolutely continuous with bounded $h'$, then
\begin{equation}\label{bd-abs-cont}
\|f_h\|_{\infty} \le 2\|h'\|_{\infty},~~\|f'_h\|_{\infty} \le \sqrt {2/\pi}\|h'\|_{\infty}, ~~\|f''_h\|_{\infty} \le 2\|h'\|_{\infty}.
\end{equation}

3. If $h = I_{(-\infty,x]}$ where $x \in \R$, then, writing $f_h$ as $f_x$, 
\begin{equation}\label{bd-indicator-1}
0 < f_x(w) \le  \sqrt{2\pi}/4, ~~|wf_x(w)| \le 1, ~~|f_x'(w)| \le 1, 
\end{equation}
and for all $w, u, v \in \R$, 
\begin{equation} \label{bd-indicator-2}
|f_x'(w) - f_x'(v)| \le 1,
\end{equation}
\begin{equation} \label{indicator-bd-4}
|(w+u)f_x(w+u)-(w+v)f_x(w+v)|\le(|w|+\sqrt{2\pi}/4)(|u|+|v|).
\end{equation}
\end{prop}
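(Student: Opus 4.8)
\medskip
\noindent\textbf{A proof proposal.}\ \
The plan is to derive every estimate from the explicit solution \eqref{Stein-eqn-soln}, from the vanishing of $\int_{\R}e^{-t^2/2}[h(t)-\E h(Z)]\,dt$ (which both makes the two formulas in \eqref{Stein-eqn-soln} agree and allows $h$ to be replaced by $h-c$, for any constant $c$, without altering $f_h$), and from the classical two-sided Mills estimates $\frac{w}{1+w^2}e^{-w^2/2}\le\int_w^\infty e^{-t^2/2}\,dt\le\min\{\sqrt{\pi/2},\,1/w\}\,e^{-w^2/2}$ for $w>0$ together with their reflections for $w<0$. One then moves from $f_h$ to $f_h'$ and to $f_h''$ through the Stein equation \eqref{Stein-equation} itself, i.e. $f_h'(w)=wf_h(w)+h(w)-\E h(Z)$ and $f_h''(w)=(1+w^2)f_h(w)+w[h(w)-\E h(Z)]+h'(w)$.

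For part~1 I would bound, for $w\ge0$, $|f_h(w)|\le\psi(w)\,\|h-\E h(Z)\|_\infty$ with $\psi(w):=e^{w^2/2}\int_w^\infty e^{-t^2/2}\,dt$, observe that $\psi'\le0$ by the upper Mills bound so $\psi(w)\le\psi(0)=\sqrt{\pi/2}$, and use $\|h-\E h(Z)\|_\infty\le2\|h\|_\infty$; the reflected argument covers $w<0$ and yields $\|f_h\|_\infty\le\sqrt{2\pi}\|h\|_\infty$. The sharper Mills bound $|w|\,e^{w^2/2}\int_{|w|}^\infty e^{-t^2/2}\,dt\le1$ gives $|wf_h(w)|\le2\|h\|_\infty$, hence $\|f_h'\|_\infty\le\|wf_h\|_\infty+\|h-\E h(Z)\|_\infty\le4\|h\|_\infty$; this is \eqref{bd-bounded}. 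For part~2, after recentering so that $\E h(Z)=0$ (hence $|h(t)|\le\|h'\|_\infty\,\E|t-Z|$), the same style of estimate applied to \eqref{Stein-eqn-soln} gives the first two bounds of \eqref{bd-abs-cont}; a clean alternative is the Mehler/Ornstein--Uhlenbeck representation $f_h(w)=-\int_0^\infty e^{-t}\,\E\,h'\!\big(e^{-t}w+\sqrt{1-e^{-2t}}\,Z\big)\,dt$, which directly gives $\|f_h\|_\infty\le\|h'\|_\infty$, and, after one Gaussian integration by parts, $f_h'(w)=-\int_0^\infty\frac{e^{-2t}}{\sqrt{1-e^{-2t}}}\,\E\big[Z\,h'\!\big(e^{-t}w+\sqrt{1-e^{-2t}}\,Z\big)\big]\,dt$, whence $\|f_h'\|_\infty\le\sqrt{2/\pi}\,\|h'\|_\infty$ (using $\int_0^\infty\frac{e^{-2t}}{\sqrt{1-e^{-2t}}}\,dt=1$ and $\E|Z|=\sqrt{2/\pi}$). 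The bound $\|f_h''\|_\infty\le2\|h'\|_\infty$ is the one that takes more care: in $f_h''(w)=(1+w^2)f_h(w)+wh(w)+h'(w)$ one must exhibit the cancellation between the first two terms, which calls for \emph{both} the upper and the lower Mills bounds.

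Part~3 is the substantive one. Plugging $h=I_{(-\infty,x]}$ (so $\E h(Z)=\Phi(x)$) into \eqref{Stein-eqn-soln} gives, with $\bar\Phi:=1-\Phi$, the closed form $f_x(w)=\sqrt{2\pi}\,e^{w^2/2}\big[\Phi(w)\bar\Phi(x)\,I(w\le x)+\bar\Phi(w)\Phi(x)\,I(w>x)\big]$, which is visibly positive and continuous at $w=x$. The monotonicities ``$w\mapsto e^{w^2/2}\Phi(w)$ nondecreasing'' and ``$w\mapsto e^{w^2/2}\bar\Phi(w)$ nonincreasing'' (both Mills facts) show that $f_x$ increases on $(-\infty,x]$ and decreases on $[x,\infty)$, so $\sup_w f_x(w)=\sqrt{2\pi}\,e^{x^2/2}\Phi(x)\bar\Phi(x)$, and the first inequality in \eqref{bd-indicator-1} reduces to the sharp one-variable estimate $\sup_{w\in\R}e^{w^2/2}\Phi(w)\bar\Phi(w)=1/4$ (that function being even and nonincreasing on $[0,\infty)$); the bound $|wf_x(w)|\le1$ is immediate from the upper Mills bound. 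For \eqref{bd-indicator-2} the key is the structural fact that $w\mapsto wf_x(w)$ is \emph{nondecreasing} on $\R$: since $\frac{d}{dw}[wf_x(w)]=(1+w^2)f_x(w)+w[I(w\le x)-\Phi(x)]$, on each half-line this is a sign check driven by the \emph{lower} Mills bound $\bar\Phi(w)\ge\frac{w}{1+w^2}\varphi(w)$ (with $\varphi(w)=\frac1{\sqrt{2\pi}}e^{-w^2/2}$), and continuity at $x$ glues the two pieces; moreover $wf_x(w)$ runs monotonically from $-\bar\Phi(x)$ to $\Phi(x)$, an interval of length $1$. Because $f_x'(w)=wf_x(w)+I(w\le x)-\Phi(x)$ differs from $wf_x(w)$ only by a step function that drops by exactly $1$ at $w=x$, the range of $f_x'$ lies in an interval of length $1$ contained in $(-1,1)$, which yields both $|f_x'(w)|\le1$ and $|f_x'(w)-f_x'(v)|\le1$ at once. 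Finally \eqref{indicator-bd-4} is bookkeeping: write $(w+u)f_x(w+u)-(w+v)f_x(w+v)=w[f_x(w+u)-f_x(w+v)]+[uf_x(w+u)-vf_x(w+v)]$, bound the first bracket by $\|f_x'\|_\infty|u-v|\le|u|+|v|$ and the second by $\|f_x\|_\infty(|u|+|v|)\le(\sqrt{2\pi}/4)(|u|+|v|)$.

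The genuinely delicate points are all in part~3: pinning down the \emph{exact} constant in $\sup_w e^{w^2/2}\Phi(w)\bar\Phi(w)=1/4$ (a cheap two-sided bound only gives $\le1/2$, i.e. $f_x(w)\le\sqrt{2\pi}/2$), and establishing the monotonicity of $w\mapsto wf_x(w)$ — the single fact that simultaneously produces $|wf_x(w)|\le1$, $|f_x'(w)|\le1$, and \eqref{bd-indicator-2}. Both rest on having the \emph{right} pair of Mills inequalities $\frac{w}{1+w^2}\varphi(w)\le\bar\Phi(w)\le\frac1w\varphi(w)$ in hand; once they are, the remaining work is routine Gaussian calculus applied to the explicit formulas.
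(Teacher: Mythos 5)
The paper does not actually prove Proposition \ref{prop-Stein-eqn-soln}: it states the bounds and refers to Lemmas 2.3 and 2.4 of Chen, Goldstein and Shao \cite{C11}. Your proposal essentially reconstructs that standard argument --- the explicit formula (\ref{Stein-eqn-soln}), two-sided Mills-ratio estimates, passage from $f_h$ to $f_h'$ and $f_h''$ through the Stein equation, and, for $h=I_{(-\infty,x]}$, the closed form $f_x(w)=\sqrt{2\pi}\,e^{w^2/2}\bigl[\Phi(w)\overline\Phi(x)I(w\le x)+\overline\Phi(w)\Phi(x)I(w>x)\bigr]$ together with the monotonicity of $w\mapsto wf_x(w)$. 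The parts you carry out are correct: all of part 1; the first two bounds of part 2 via the Ornstein--Uhlenbeck representation (which in fact gives the stronger $\|f_h\|_\infty\le\|h'\|_\infty$); and in part 3 the location of the maximum of $f_x$ at $w=x$, the derivation of $|wf_x(w)|\le 1$, the reduction of $|f_x'(w)|\le1$ and of (\ref{bd-indicator-2}) to the fact that $wf_x(w)$ increases from $-\overline\Phi(x)$ to $\Phi(x)$, and the decomposition proving (\ref{indicator-bd-4}).

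Two places are asserted rather than proved, and neither follows from the pair of Mills inequalities alone, contrary to what your closing paragraph suggests. First, the sharp constant $\sup_{w}e^{w^2/2}\Phi(w)\overline\Phi(w)=1/4$: evenness is clear, but the claimed monotone decrease on $[0,\infty)$ amounts to $w\Phi(w)\overline\Phi(w)\le\varphi(w)\bigl(\Phi(w)-\overline\Phi(w)\bigr)$ for $w>0$, an inequality that is tight to first order at $w=0$ and so cannot be obtained by crudely inserting $\tfrac{w}{1+w^2}\varphi(w)\le\overline\Phi(w)\le\varphi(w)/w$; it needs its own short argument (a direct Mills estimate only yields $f_x(w)\le\sqrt{2\pi}/2$, as you observe). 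Second, the bound $\|f_h''\|_\infty\le2\|h'\|_\infty$ in part 2, which you only flag: there one must represent $h(t)-\E h(Z)$ as an integral of $h'$ against the Gaussian measure and use $\int_{\R}e^{-t^2/2}[h(t)-\E h(Z)]\,dt=0$ to make the cancellation between the individually unbounded terms $(1+w^2)f_h(w)$ and $w[h(w)-\E h(Z)]$ quantitative; this is the longest computation in the cited lemma. Both facts are true and classical, so your proposal is a correct outline of the standard proof with these two lemmas still to be filled in.
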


The bounds in the proposition and their proofs can be found in Lemmas 2.3 and 2.4 of Chen, Goldstein and Shao \cite{C11}.

In the case where $W^*$ is be coupled with $W$, that is, there is a {\it zero-bias coupling}, we have the following result.

\begin{thm} \label{thm-bd-zero-bias-coupling}
Assume that $\E W=0$ and $\Var(W)=1$ and that $W^*$ and $W$ are defined on the same probability space. Then
\begin{equation} \label{bd-zero-bias-couplng}
d_{\mathrm{W}}(\law(W), \mathcal{N}(0,1)) \le 2\E|W^*-W|.
\end{equation}
\end{thm}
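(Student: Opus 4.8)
The plan is to run the classical Stein-equation argument and then feed in the zero-bias identity derived just above Definition~\ref{def-distances}. Fix a test function $h$ with $\|h'\|_\infty\le 1$; by the remark following Definition~\ref{def-distances} it suffices to treat $h\in\mathcal{C}^1$ with $\|h'\|_\infty\le 1$ (and we may assume $h(0)=0$), but in fact any $1$-Lipschitz $h$ is already absolutely continuous with $\|h'\|_\infty\le 1$, so Proposition~\ref{prop-Stein-eqn-soln}(2) applies to it directly. Let $f_h$ be the solution \eqref{Stein-eqn-soln} of the Stein equation \eqref{Stein-equation}. From Proposition~\ref{prop-Stein-eqn-soln}(2) we get $\|f_h\|_\infty\le 2$, $\|f_h'\|_\infty\le\sqrt{2/\pi}$ and, crucially, $\|f_h''\|_\infty\le 2\|h'\|_\infty\le 2$; in particular $f_h$ is absolutely continuous and $f_h'$ is Lipschitz with constant at most $\|f_h''\|_\infty$.

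Evaluating the Stein equation at $W$ and taking expectations gives
\[
\E h(W)-\E h(Z)=\E\bigl[f_h'(W)-Wf_h(W)\bigr],
\]
and all expectations here are finite because $f_h$ and $f_h'$ are bounded and $\E|W|<\infty$ (as $\Var(W)=1$). Since $f_h$ is absolutely continuous and these expectations exist, the zero-bias identity $\E Wf(W)=B^2\E f'(W^*)$ with $B^2=\Var(W)=1$ applies to $f=f_h$, so $\E Wf_h(W)=\E f_h'(W^*)$ and therefore
\[
\E h(W)-\E h(Z)=\E\bigl[f_h'(W)-f_h'(W^*)\bigr].
\]
Because $W$ and $W^*$ are defined on the same probability space and $f_h'$ is Lipschitz with constant at most $\|f_h''\|_\infty\le 2$,
\[
\bigl|\E h(W)-\E h(Z)\bigr|\le \|f_h''\|_\infty\,\E|W-W^*|\le 2\,\E|W^*-W|.
\]
Taking the supremum over all $1$-Lipschitz $h$ yields \eqref{bd-zero-bias-couplng}.

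The one place needing genuine care is the application of the zero-bias identity to $f_h$: one must know that $f_h$ is absolutely continuous (it is, being $\mathcal{C}^1$ with bounded derivative) and that both $\E Wf_h(W)$ and $\E f_h'(W^*)$ are well-defined and finite, which is guaranteed by the uniform bounds of Proposition~\ref{prop-Stein-eqn-soln}(2) together with $\E W^2=1$. The only other subtlety is the passage from smooth to merely Lipschitz test functions, dispatched either by the density statement after Definition~\ref{def-distances} or simply by noting that a $1$-Lipschitz function already satisfies the hypotheses of Proposition~\ref{prop-Stein-eqn-soln}(2); the rest is routine Stein-method bookkeeping.
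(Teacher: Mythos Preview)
Your proof is correct and follows essentially the same route as the paper's own argument: solve the Stein equation for a test function $h$ with $\|h'\|_\infty\le 1$, invoke the zero-bias identity to rewrite $\E[f_h'(W)-Wf_h(W)]$ as $\E[f_h'(W)-f_h'(W^*)]$, and then bound this by $\|f_h''\|_\infty\,\E|W^*-W|\le 2\E|W^*-W|$ via Proposition~\ref{prop-Stein-eqn-soln}(2). The paper writes the last step as an integral $\E\int_0^{W^*-W}f_h''(W+t)\,dt$ rather than appealing to the Lipschitz property of $f_h'$, but this is the same estimate; your version is in fact a bit more careful about integrability and the class of admissible $h$.
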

\begin{proof}
Let $h$ be absolutely continuous with $\|h'\|_\infty \le 1$. Then by the definition of zero-biased distribution and by (\ref{bd-abs-cont}), 
\begin{eqnarray*}
|\E h(W)- \E h(Z)| &=& |\E[f_h'(W) - Wf_h(W)]| =|\E[f_h'(W)- f_h'(W^*)]  \\
&=& |\E\int_0^{W^*-W}f_h''(W+t)dt| \le \|f_h''\|_\infty \E|W^*-W| \\
&\le& 2\|h'\|_\infty\E|W^*-W| \le 2\E|W^*-W|.
\end{eqnarray*}
This proves the theorem.
\end{proof}
Theorem \ref{thm-bd-zero-bias-coupling} shows that $\E|W^*-W|$ provides an upper bound on the Wasserstein distance. We now construct a zero-bias coupling in the case where $W$ is a sum of independent random variables and show that $\E|W^*-W|$ indeed gives an optimal bound.

Let $X_1, \dots,X_n$ be independent random variables with $\E X_i=0$, $\mathrm{Var}(X_i)=\sigma_i^2 >0$ and $\E|X_i|^3 < \infty$. Let $W = \sum_{i=1}^n X_i$ and $W^{(i)}=W-X_i$. Assume $\Var(W)=1 (\Longrightarrow \sum_{i=1}^n \sigma_i^2 = 1)$.  
\medskip
\\
Define \\
(i) $I$ to be such that $P(I=i) = \sigma_i^2$ for $i=1,\cdots,n$;\\
(ii) $X_i^*$ to be $X_i$-zero-biased, $i = 1,\cdots,n$;\\
(iii) $I$, $X_1^*,\cdots,X_n^*$, $X_1,\cdots,X_n$ to be independent.
\medskip
\\
Then for absolutely continuous $f$ such that $\|f\|_\infty < \infty$ and $\|f'\|_\infty < \infty$,
\begin{eqnarray} \label{zero-bias coupling} \notag
\E Wf(W)&=& \sum_{i=1}^n \E X_if(W^{(i)}+ X_i) = \sum_{i=1}^n \sigma_i^2\E f'(W^{(i)}+X_i^*)\\ \label{W^*-coupled-indep-rv}
&=& \E f'(W^{(I)}+X_I^*)= \E f'(W^*).
\end{eqnarray}
So $W^*$ is coupled with $W$ and $W^*-W = X_I^* - X_I$. Note that the density of $X_i^*$ is given by $\sigma_i^{-2}\E X_iI(X_i > x)$. Straightforward calculations yield ${\displaystyle \E|X_I|\le \sum_{i=1}^n \E|X_i|^3}$ and ${\displaystyle \E|X_I^*| \le \frac{1}{2}\sum_{i=1}^n\E|X_i|^3}$. Therefore
\begin{equation} \label{E|W^*-W|-bd}
\E|W^*-W| = \E|X_I^*-X_I| \le \E|X_I^*| + \E|X_I| \le \frac{3}{2}\sum_{i=1}^n\E|X_i|^3.
\end{equation}
We immediately have the following corollary of Theorem \ref{thm-bd-zero-bias-coupling}.
\begin{cor} \label{cor-Wasserstein-bd}
Let $X_1. \cdots, X_n$ be independent with $\E X_i = 0$, $\Var(X_i) = \sigma_i^2$ and  $\E|X_i|^3  < \infty$, $i=1,\cdots,n$. Let $W = \sum_{i=1}^n X_i$ and assume that $\Var(W) = 1$. Then 
\begin{equation} \label{Wasserstein-bd}
d_{\mathrm{W}}(\law(W), \mathcal{N}(0,1)) \le 3\sum_{i=1}^n \E|X_i|^3.
\end{equation}
\end{cor}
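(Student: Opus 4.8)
The plan is to invoke Theorem~\ref{thm-bd-zero-bias-coupling}, so that the whole statement reduces to exhibiting a zero-bias coupling of $W=\sum_{i=1}^n X_i$ on a common probability space and estimating $\E|W^*-W|$. First I would carry out the randomized construction (i)--(iii): introduce an index $I$ with $P(I=i)=\sigma_i^2$, independent copies $X_i^*$ of the $X_i$-zero-biased variables, the whole family taken independent of the $X_j$'s, and set $W^*:=W^{(I)}+X_I^*$ with $W^{(i)}=W-X_i$. The identity to check is that this $W^*$ is genuinely $W$-zero-biased: for absolutely continuous $f$ with $f,f'$ bounded, expand $\E Wf(W)=\sum_i\E X_i f(W^{(i)}+X_i)$ by linearity, apply the defining relation $\E X_i g(X_i)=\sigma_i^2\,\E g'(X_i^*)$ conditionally on $W^{(i)}$ (legitimate since $X_i$ and $X_i^*$ are each independent of $W^{(i)}$) to obtain $\sum_i\sigma_i^2\,\E f'(W^{(i)}+X_i^*)$, and recognize this sum as $\E f'(W^{(I)}+X_I^*)$ from the chosen law of $I$. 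This gives $W^*-W=X_I^*-X_I$.

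Next I would bound $\E|W^*-W|\le\E|X_I^*|+\E|X_I|$ by the triangle inequality and estimate the two pieces. For the first, $\E|X_I|=\sum_i\sigma_i^2\,\E|X_i|$, and using $\E|X_i|\le\sigma_i$ together with $\sigma_i^3\le\E|X_i|^3$ (Jensen applied to $t\mapsto t^{3/2}$) yields $\E|X_I|\le\sum_i\E|X_i|^3$. For the second, $\E|X_I^*|=\sum_i\sigma_i^2\,\E|X_i^*|$, and applying the zero-bias identity to $f(x)=\tfrac12 x|x|$, whose derivative is $|x|$, gives $\sigma_i^2\,\E|X_i^*|=\E\bigl[X_i\cdot\tfrac12 X_i|X_i|\bigr]=\tfrac12\E|X_i|^3$, hence $\E|X_I^*|=\tfrac12\sum_i\E|X_i|^3$. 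Adding the two bounds, $\E|W^*-W|\le\tfrac32\sum_i\E|X_i|^3$, and Theorem~\ref{thm-bd-zero-bias-coupling} delivers $d_{\mathrm W}(\law(W),\mathcal N(0,1))\le 2\E|W^*-W|\le 3\sum_{i=1}^n\E|X_i|^3$.

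The one genuinely delicate point is the use of the zero-bias identity with the unbounded test function $f(x)=\tfrac12 x|x|$: the identity as stated requires $f$ and $f'$ bounded, so I would justify the extension by truncation, applying it to a bounded smooth approximant $f_M$ with $f_M(x)=\tfrac12 x|x|$ for $|x|\le M$ and controlled derivative, then passing to the limit using that $\E|X_i|^3<\infty$ supplies the uniform integrability needed for dominated/monotone convergence. Verifying that $W^*$ is zero-biased and the two elementary moment inequalities are otherwise routine, and the remainder is a direct assembly of Theorem~\ref{thm-bd-zero-bias-coupling} with these estimates.
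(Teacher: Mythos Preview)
Your proposal is correct and follows essentially the same approach as the paper: construct the zero-bias coupling $W^*=W^{(I)}+X_I^*$, verify $W^*-W=X_I^*-X_I$, bound $\E|W^*-W|\le\tfrac32\sum_i\E|X_i|^3$, and apply Theorem~\ref{thm-bd-zero-bias-coupling}. The only minor variation is that the paper computes $\E|X_i^*|$ directly from the zero-bias density $\sigma_i^{-2}\E X_iI(X_i>x)$, whereas you obtain the same identity $\sigma_i^2\E|X_i^*|=\tfrac12\E|X_i|^3$ via the zero-bias relation with $f(x)=\tfrac12 x|x|$; your remark about justifying this unbounded choice by truncation is a welcome bit of care that the paper leaves implicit.
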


It is much more difficult to obtain an optimal bound on the Kolmogorov distance between $\law (W)$ and ${\mathcal{N}(0,1)}$. Such a bound can be obtained by induction or by the use of a concentration inequality. For induction, see Bolthausen \cite{B84}. For the use of a concentration inequality, see Chen \cite{C98} and Chen and Shao \cite{CS01}  for sums of independent random variables, and Chen and Shao \cite{CS04} for sums of locally dependent random variables. See also Chen, Goldstein and Shao \cite{C11}. For sums of independent random variables, Chen and Shao \cite{CS01} obtained a bound of $4.1\sum \E|X_i|^3$ on the Kolmogorov distance. In the next subsection we will give a proof of an optimal bound on the Kolmogorov distance using the concentration inequality approach.

In general, it is difficult to construct zero-bias couplings such that $\E|W^*-W|$ is small for normal approximation. However, by other methods, one can construct an equation of the form,
\begin{equation}\label{Stein-general}
\E Wf(W) = \E T_1 f'(W + T_2),
\end{equation}
where $T_1$ and $T_2$ are some random variables defined on the same probability space as $W$, and $f$ is an absolutely continuous function for which the expectations in (\ref{Stein-general}) exist. Heuristically, in view of Proposition \ref{prop-characterization}, $\law(W)$ is "close" to $\mathcal{N}(0,1)$ if $T_1$ is "close" to $1$ and $T_2$ is "close" to $0$. Examples of $W$ satisfying this equation include sums of locally dependent random variables as considered in Chen and Shao \cite{CS04} and exchangeable pairs as defined in Stein \cite{S86}. More generally, a random variable $W$ satisfies (\ref{Stein-general}) if there is a {\it Stein coupling} $(W,W',G)$ where $W, W',G$ are defined on a common probability space such that $\E Wf(W)= \E\big[Gf(W') - Gf(W)\big]$ for absolutely continuous functions $f$ for which the expectations exist (see {Chen and R\"ollin \cite{C13b}}). In all cases it is assumed that $\E W = 0$ and $\Var (W) = 1$. Letting $f(w) = w$, we have $1 = \E W^2 = \E T_1.$ The case of zero-bias coupling corresponds to $T_1=1$. 
\medskip

As an illustration, let $(W,W')$ be an {\it exchangeable pair} of random variables, that is, $(W,W')$ has the same distribution as $(W',W)$. Assume that $\E W = 0$ and $\mathrm{Var}(W) = 1$ and that $\E \big[W' - W|W\big] = -\lambda W$ for some $\lambda > 0$. Since the function $(w,w') \longmapsto (w'-w)(f(w') + f(w))$ is anti-symmetric, the exchangeability of $(W,W')$ implies
$$
E\big[(W'-W)(f(W')+f(W))\big] = 0.
$$
From this we obtain
\begin{eqnarray*}
&&\E Wf(W) =\frac{1}{2\lambda} \E\Big[(W'-W)(f(W')-f(W))\Big] \\
&& \quad = \frac{1}{2\lambda}\E\Big[(W'-W)^2\int_0^1f'(W+ (W'-W)t)dt\Big] = {\E \big[T_1f'(W+T_2)\big]}
\end{eqnarray*}
where ${T_1 = {\displaystyle \frac{1}{2\lambda}(W'-W)^2}}$, ~$T_2 = (W'-W)U$, and $U$ uniformly distributed on $[0,1]$ and independent of ${W, W'}, ~T_1$ and $T_2$. The notion of exchangeable pair is central to Stein's method. It has been extensively used in the literature.
\medskip

Here is a simple example of an exchangeable pair. Let $X_1, \cdots,X_n$ be independent random variables such that $\E X_i = 0$ and $\Var(W)=1$, where $W = \sum_{i=1}^n X_i$. Let $X'_1,\cdots,X'_n$ be an independent copy of $X_1, \cdots,X_n$ and let $W' = W - X_I + X'_I$, where $I$ is uniformly distributed on $\{1, \cdots,n\}$ and independent of $\{X_i, X'_i, 1\le i \le n\}$. Then $(W,W')$ is an exchangeable pair and $\E[W'-W|W]= -\frac{1}{n}W$.
\medskip

Assume that $\E W = 0$ and $\Var (W) = 1$. From (\ref{Stein-equation}) and (\ref{Stein-general}),
\begin{eqnarray}\notag
\E h(W) - \E h(Z) &=& \E \big[f'_h(W) - T_1f'_h(W + T_2)\big]\\ \notag
&=&\E \big[T_1(f'_h(W) - f'_h(W + T_2))\big] + \E \big[(1 - T_1)f_h'(W)\big].\\
\label{Stein-solu-property}
\end{eqnarray}
Different techniques have been developed for bounding the error terms on the right side of (\ref{Stein-solu-property}). Apart from zero-bias coupling, which corresponds to $T_1=1$, we will focus on the case where $T_2 = 0$. This is the case if $W$ is a functional of independent Gaussian random variables as considered by Chatterjee \cite{C09} or a functional of Gaussian random fields as considered by Nourdin and Peccati \cite{NP09}. In this case, (\ref{Stein-solu-property}) becomes
$$
\E h(W) - \E h(Z) = \E \big[(1 - T_1)f_h'(W)\big] = \E\big[(1 - \E[T_1|W])f_h'(W)\big].
$$
Let $h$ be such that $|h| \le 1$. Then, by Proposition \ref{prop-Stein-eqn-soln}, we obtain the following bound on the  total variation distance between $\law(W)$ and $\mathcal{N}(0,1)$.
\begin{eqnarray*}
d_{\mathrm{TV}}(\law(W), \mathcal{N}(0,1)) &:=& \frac{1}{2}\sup_{|h|\le 1}|\E h(W) - \E h(Z)| \\
&\le& \frac{1}{2}\|f_h'\|_{\infty}\E|1 - \E[T_1|W]|\\ 
&\le& 2\|h\|_\infty\E|1 - \E[T_1|W]| \\
&\le&  2\sqrt{\Var(\E[T_1|W])},
\end{eqnarray*}
where for the last inequality it is assumed that $\E[T_1|W]$ is square integrable.
\medskip

While Chatterjee \cite{C09} developed second order Poincar\'e inequalities  to bound $2\sqrt{\Var(\E[T_1|W])}$, Nourdin and Peccati \cite{NP09} deployed Malliavin calculus. In Sections 3 and 4, we will discuss how Malliavin calculus is used to bound $2\sqrt{\Var(\E[T_1|W])}$.

\subsection{Berry-Esseen theorem}
In this subsection, we will give a proof of the Berry-Esseen theorem for sums of independent random variables using zero-bias coupling and a concentration inequality.

\begin{thm}[Berry-Esseen] \label{B-E-thm}
Let $X_1,\cdots,X_n$ be independent random variables with $\E X_i = 0$, $\Var(X_i)= \sigma^2_i$, and $\E|X_i|^3 = \gamma_i < \infty$. Let $W= \sum_{i=1}^n X_i$ and assume $\Var(W)=1$. Then
\begin{equation}\label{B-E-bd}
d_{\mathrm{K}}(\law(W), \mathcal{N}(0,1)) \le 7.1\sum_{i=1}^n \gamma_i.
\end{equation}
\end{thm}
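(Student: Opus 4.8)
The plan is to use the zero-bias coupling constructed in the excerpt together with a concentration inequality for the zero-biased variable $W^*$. Recall from equation (\ref{W^*-coupled-indep-rv}) that we can realize $W^* = W^{(I)} + X_I^*$ on a common probability space with $W$, so that $W^* - W = X_I^* - X_I$ where $I$, $\{X_i\}$, $\{X_i^*\}$ are independent as specified. The starting point is the Stein equation for the indicator test function: for fixed $x \in \R$, let $f_x$ be the solution of (\ref{Stein-equation}) with $h = I_{(-\infty,x]}$, whose bounds are collected in (\ref{bd-indicator-1})--(\ref{bd-indicator-2}). Then
$$
F(x) - \Phi(x) = \E[f_x'(W) - Wf_x(W)] = \E[f_x'(W) - f_x'(W^*)],
$$
using the defining property $\E Wf(W) = \E f'(W^*)$ of the zero-bias coupling.

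The second step is to rewrite $\E[f_x'(W) - f_x'(W^*)]$ in a form amenable to estimation. Since $f_x' $ satisfies the Stein equation itself, $f_x'(w) = wf_x(w) + I_{(-\infty,x]}(w) - \Phi(x)$, so
$$
f_x'(W) - f_x'(W^*) = \big(Wf_x(W) - W^*f_x(W^*)\big) + \big(I(W \le x) - I(W^* \le x)\big).
$$
Taking expectations, the first bracket is handled by the smoothness bound (\ref{indicator-bd-4}) applied with $w = W$, $u = 0$, $v = W^* - W = X_I^* - X_I$, giving a contribution bounded by $\E\big[(|W| + \sqrt{2\pi}/4)(|X_I^*| + |X_I|)\big]$; by independence of $(X_I^*, X_I)$ from $W^{(I)}$ and the third-moment computations already recorded in the excerpt (namely $\E|X_I| \le \sum \gamma_i$ and $\E|X_I^*| \le \tfrac12 \sum\gamma_i$) this is $O(\sum \gamma_i)$ after controlling $\E|W|\,(|X_I^*|+|X_I|)$ by Cauchy--Schwarz. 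The delicate term is $\E[I(W \le x) - I(W^* \le x)] = \Pro(W \le x) - \Pro(W^* \le x)$, which measures how much mass the zero-biasing moves across the threshold $x$; writing $W^* = W^{(I)} + X_I^*$ and $W = W^{(I)} + X_I$, this difference is bounded by $\Pro\big(x - |X_I^*| - |X_I| \vee 0 < W \le x + |X_I^*| + |X_I|\big)$-type events, i.e. by the probability that $W$ lies in a short random interval around $x$.

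The third and crucial step is the concentration inequality: one shows that for $W^{(i)} = W - X_i$ and any $a \le b$,
$$
\Pro(a \le W^{(i)} \le b) \le C\big(b - a + \gamma^{1/3}\big) \quad\text{or more usefully}\quad \Pro(a \le W \le b) \le \sqrt 2\,(b-a) + \text{(remainder in }\sum\gamma_i\text{)},
$$
proved by the Stein-type argument: apply the identity $\E Wg(W) = \sum_i \E X_i [g(W^{(i)} + X_i) - g(W^{(i)})]$ to the function $g$ that is the piecewise-linear antiderivative of $I_{[a,b]}$, bound the left side by $\E|W| \le 1$, and bound the increments $g(W^{(i)}+X_i) - g(W^{(i)})$ using that $g$ is Lipschitz with constant $1$ plus the fact that $g' = I_{[a,b]}$. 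This yields $\Pro(a \le W \le b)$ controlled by $(b-a)$ plus an error term $O(\sum\gamma_i)$. Feeding in $b - a = |X_I^*| + |X_I|$ (conditionally on these variables, and independent of $W^{(I)}$) and taking expectations, using once more $\E(|X_I^*| + |X_I|) \le \tfrac32\sum\gamma_i$, bounds the threshold term by $O(\sum \gamma_i)$. Assembling the three pieces and tracking constants carefully produces the bound $7.1 \sum_{i=1}^n \gamma_i$.

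The main obstacle is the concentration inequality and the bookkeeping of constants. The inequality $\Pro(a \le W \le b) \le \sqrt{2}(b-a) + 2\sum\gamma_i$ (or a variant) is not a formal consequence of anything stated earlier; it requires its own Stein-type argument with a carefully chosen test function, and one must handle the case where $|X_i|$ is itself large (truncation at a level comparable to $1$) so that the increments of $g$ along $X_i$ are genuinely small. Getting the final numerical constant down to $7.1$ rather than something larger requires being economical at each of these estimates — in particular choosing the truncation level and splitting $\E|W|(|X_I| + |X_I^*|)$ optimally — which is where most of the real work lies, though none of it is conceptually deep once the zero-bias coupling and the concentration bound are in place.
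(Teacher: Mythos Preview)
Your approach is essentially the paper's. Your decomposition
\[
F(x)-\Phi(x)=\E\big[Wf_x(W)-W^*f_x(W^*)\big]+\big(P(W\le x)-P(W^*\le x)\big)
\]
is exactly the triangle inequality through $P(W^*\le x)$: since $\E[Wf_x(W)]=\E[f_x'(W^*)]$ by zero-bias, the first bracket equals $\E[f_x'(W^*)-W^*f_x(W^*)]=P(W^*\le x)-\Phi(x)$, which is precisely Proposition~\ref{B-E-bd-zero-bias-prop}; the second bracket is then handled by the concentration inequality for $W^{(i)}$ (Proposition~\ref{prop-ci}) applied conditionally on $(X_I,X_I^*)$, just as you outline.

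One correction is needed. For the first bracket you should apply (\ref{indicator-bd-4}) with $w=W^{(I)}$, $u=X_I$, $v=X_I^*$ rather than with $w=W$, $u=0$, $v=W^*-W$. This yields $\E\big[(|W^{(I)}|+\sqrt{2\pi}/4)(|X_I|+|X_I^*|)\big]$, and independence of $W^{(I)}$ from $(X_I,X_I^*)$ factorizes the expectation directly, giving the paper's constant $2.44$. Your proposed Cauchy--Schwarz on $\E\big[|W|(|X_I|+|X_I^*|)\big]$ does \emph{not} work under the stated hypotheses: it would require $\E(X_I^*)^2<\infty$, and since $\E(X_i^*)^2=\E X_i^4/(3\sigma_i^2)$ this demands fourth moments of the $X_i$, which are not assumed. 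Your concentration-inequality sketch is correct in spirit; the paper's version is stated for $W^{(i)}$ (not $W$), uses the test function with $f'=I_{[a-\delta,\,b+\delta]}$, and takes the truncation level $\delta=\sum_j\gamma_j$ (not ``comparable to $1$''), which is what produces constants summing to $7.1$.
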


We first prove two propositions using the same notation as in Theorem \ref{B-E-thm}. Let $W^*$ be $W$-zero-biased and assume that it is coupled with $W$ as given in (\ref{zero-bias coupling}). Let $\Phi$ denote the distribution function of $\mathcal{N}(0,1)$.

\begin{prop} \label{B-E-bd-zero-bias-prop}
For $x \in \R$,
\begin{eqnarray} \label{be-zero-bias-bd}
|P(W^*\le x) - \Phi(x)| \le 2.44\sum_{i=1}^n\gamma_i.
\end{eqnarray}
\end{prop}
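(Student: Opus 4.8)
The plan is to apply Stein's method with the indicator test function $h=I_{(-\infty,x]}$ directly to $W^{*}$, convert the term $\E f_x'(W^{*})$ back into a functional of $W$ via the defining identity of the zero-biased distribution, and then control the remaining difference with the sharp estimate (\ref{indicator-bd-4}) and the explicit coupling (\ref{W^*-coupled-indep-rv}).

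First I would let $f_x=f_h$ be the solution (\ref{Stein-eqn-soln}) of the Stein equation (\ref{Stein-equation}) for $h=I_{(-\infty,x]}$, so that $f_x'(w)-wf_x(w)=I_{(-\infty,x]}(w)-\Phi(x)$ and, by Proposition \ref{prop-Stein-eqn-soln}(3), $|f_x|\le\sqrt{2\pi}/4$ and $|f_x'|\le1$. Evaluating the Stein equation at $W^{*}$ and taking expectations gives
$$
P(W^{*}\le x)-\Phi(x)=\E[f_x'(W^{*})]-\E[W^{*}f_x(W^{*})].
$$
Since $\Var(W)=1$, $f_x$ is absolutely continuous with $f_x,f_x'$ bounded, and $\E|W|<\infty$, the zero-bias identity $\E Wg(W)=\E g'(W^{*})$ applies with $g=f_x$ and gives $\E[f_x'(W^{*})]=\E[Wf_x(W)]$; hence
$$
P(W^{*}\le x)-\Phi(x)=\E\big[Wf_x(W)-W^{*}f_x(W^{*})\big].
$$

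Next, the coupling (\ref{W^*-coupled-indep-rv}) gives $W^{*}=W+(X_I^{*}-X_I)$, so applying (\ref{indicator-bd-4}) pointwise with $w=W$, $u=X_I^{*}-X_I$, $v=0$ yields $|W^{*}f_x(W^{*})-Wf_x(W)|\le(|W|+\tfrac{\sqrt{2\pi}}{4})\,|X_I^{*}-X_I|$, whence $|P(W^{*}\le x)-\Phi(x)|\le\E[(|W|+c)\,|X_I^{*}-X_I|]$ with $c=\sqrt{2\pi}/4$. To estimate this, condition on $I$: on $\{I=i\}$ one has $W=W^{(i)}+X_i$, and by assumption (iii) of the construction $X_i^{*}$ is independent of $(W^{(i)},X_i)$, hence of $W$; combined with $|X_i^{*}-X_i|\le|X_i^{*}|+|X_i|$ this gives
$$
\E\big[(|W|+c)\,|X_i^{*}-X_i|\big]\le(\E|W|+c)\,\E|X_i^{*}|+\E\big[|W|\,|X_i|\big]+c\,\E|X_i|.
$$
Using $\E|W|\le1$, $\E[|W||X_i|]\le\sigma_i$ and $\E|X_i|\le\sigma_i$ from the Cauchy--Schwarz inequality, the identity $\sigma_i^{2}\,\E|X_i^{*}|=\gamma_i/2$ (the zero-bias relation for $X_i$ applied to $g(t)=\tfrac12 t|t|$), and $\sigma_i^{3}\le\gamma_i$ (Jensen), and then summing against $P(I=i)=\sigma_i^{2}$, the bound becomes $\tfrac32\big(1+\tfrac{\sqrt{2\pi}}{4}\big)\sum_{i=1}^{n}\gamma_i\le2.44\sum_{i=1}^{n}\gamma_i$, which is (\ref{be-zero-bias-bd}).

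The moment estimates in the last step are routine; the part requiring care is keeping the constant at $2.44$. This forces one to use the sharp inequality (\ref{indicator-bd-4}) instead of bounding $|wf_x'(w)|$ separately, and to use the independence of $X_i^{*}$ from $W^{(i)}$ and from $X_i$ — a crude split of $|W|$ into $|W^{(i)}|+|X_i|$, or a loose treatment of the two zero-biased factors, would push the constant well above $2.44$.
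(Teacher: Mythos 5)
Your proof is correct and follows essentially the same route as the paper: the Stein equation evaluated at $W^*$, the zero-bias identity to replace $\E f_x'(W^*)$ by $\E Wf_x(W)$, the sharp bound (\ref{indicator-bd-4}), and third-moment estimates for $X_I$ and $X_I^*$. The only difference is in the bookkeeping: the paper applies (\ref{indicator-bd-4}) with $w=W^{(I)}$, $u=X_I$, $v=X_I^*$, so the factor $|W^{(I)}|+\sqrt{2\pi}/4$ is independent of $|X_I|+|X_I^*|$ and no Cauchy--Schwarz step is needed, whereas your choice $w=W$, $u=X_I^*-X_I$, $v=0$ requires the extra estimate $\E[|W||X_i|]\le\sigma_i$; both yield the same constant $\tfrac{3}{2}\bigl(1+\sqrt{2\pi}/4\bigr)\le 2.44$.
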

\begin{proof}
Let $f_x$ be the unique bounded solution of the Stein equation
\begin{eqnarray} \label{Stein-eq-ci}
f'(w) - wf(w) = I(w \le x ) - \Phi(x)
\end{eqnarray}
where $x\in\R$.
The solution $f_x$ is given by (\ref{Stein-eqn-soln}) with $h(w) = I(w\le x)$.
From this equation and by (\ref{indicator-bd-4}),
\begin{eqnarray*} 
&&|P(W^* \le x) - \Phi(x)| \\
&& \quad = |\E[ f'_x(W^*)-W^*f_x(W^*)]|\\ 
&& \quad = |\E[(W^{(I)}+X_I)f_x(W^{(I)}+X_I)-(W^{(I)}+X_I^*)f_x(W^{(I)}+X_I^*)]| \\ 
&& \quad \le \E(|W^{(I)}|+ \frac{\sqrt{2\pi}}{4})(|X_I|+|X_I^*|)\\ 
&& \quad \le \frac{3}{2}(1+\frac{\sqrt{2\pi}}{4})\sum_{i=1}^n\gamma_i \le 2.44\sum_{i=1}^n\gamma_i.
\end{eqnarray*}
This proves Proposition \ref{B-E-bd-zero-bias-prop}.
\end{proof}

Next we prove a concentration inequality.

\begin{prop} \label{prop-ci}
For $i=1,\dots,n$ and for $a \le b$, $a, b \in \R$, we have
\begin{eqnarray} \label{ci}
P(a\le W^{(i)} \le b) \le \frac{2\sqrt{2}}{3}(b-a)+\frac{4(\sqrt{2}+1)}{3}\sum_{i=1}^n \gamma_i.
\end{eqnarray}
\end{prop}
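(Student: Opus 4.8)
The plan is to test a smooth ``ramp'' function against a Stein-type identity for the sum $W^{(i)}=\sum_{j\ne i}X_j$ of independent, mean-zero random variables, and then to remove a truncation by a concentration argument. Fix a parameter $\delta>0$, to be chosen at the end, and let $f=f_{a,b,\delta}$ be the continuous, nondecreasing, piecewise-linear function equal to $-\tfrac12(b-a)-\delta$ on $(-\infty,a-\delta]$, to $w-\tfrac12(a+b)$ on $[a-\delta,b+\delta]$, and to $\tfrac12(b-a)+\delta$ on $[b+\delta,\infty)$. Then $\|f\|_{\infty}=\tfrac12(b-a)+\delta$, and $f'(w)=I(a-\delta<w<b+\delta)$ almost everywhere, so $0\le f'\le 1$ and $f'\equiv 1$ on $[a,b]$.

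For $j\ne i$ write $W^{(i)}_j:=W^{(i)}-X_j$. Since $W^{(i)}_j$ is independent of $X_j$ and $\E X_j=0$, we have $\E[X_jf(W^{(i)}_j)]=0$, whence
\[
\E\big[W^{(i)}f(W^{(i)})\big]=\sum_{j\ne i}\E\big[X_j\big(f(W^{(i)})-f(W^{(i)}_j)\big)\big]=\sum_{j\ne i}\E\Big[X_j\int_0^{X_j}f'(W^{(i)}_j+t)\,dt\Big],
\]
and each summand is nonnegative because $f$ is nondecreasing. On the left I would use $\big|W^{(i)}f(W^{(i)})\big|\le\|f\|_{\infty}|W^{(i)}|$ together with $\E|W^{(i)}|\le\sqrt{\Var(W^{(i)})}\le 1$ to get $\E[W^{(i)}f(W^{(i)})]\le\tfrac12(b-a)+\delta$. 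For the lower bound the key elementary remark is that whenever $|X_j|\le\delta$ and $a\le W^{(i)}\le b$, the point $W^{(i)}_j+t=W^{(i)}-(X_j-t)$ lies in $[a-\delta,b+\delta]$ for every $t$ between $0$ and $X_j$, so $f'\equiv 1$ there and the $j$-th summand equals $X_j^2$. Hence, writing $S:=\sum_{j\ne i}X_j^2\,I(|X_j|\le\delta)$,
\[
\E\big[W^{(i)}f(W^{(i)})\big]\ \ge\ \E\big[I(a\le W^{(i)}\le b)\,S\big],\qquad\text{so}\qquad \E\big[I(a\le W^{(i)}\le b)\,S\big]\le\tfrac12(b-a)+\delta .
\]

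It remains to convert the left-hand side into a constant multiple of $\Pro(a\le W^{(i)}\le b)$, and this is where the real work lies. Here $S$ is a sum of independent nonnegative random variables with $\E S\ge(1-\sigma_i^2)-\delta^{-1}\sum_{k=1}^n\gamma_k$ by Markov's inequality, and $\Var(S)\le\sum_{j\ne i}\E\big[X_j^4I(|X_j|\le\delta)\big]\le\delta\sum_{k=1}^n\gamma_k$ since $X_j^4I(|X_j|\le\delta)\le\delta|X_j|^3$; thus Chebyshev's inequality shows $S$ is concentrated near $\E S$, and for a suitable threshold $s\in(0,\E S)$,
\[
\E\big[I(a\le W^{(i)}\le b)\,S\big]\ \ge\ (\E S-s)\Big(\Pro(a\le W^{(i)}\le b)-\Var(S)/s^2\Big),
\]
so that $\Pro(a\le W^{(i)}\le b)\le\dfrac{\tfrac12(b-a)+\delta}{\E S-s}+\dfrac{\Var(S)}{s^2}$. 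One then chooses $\delta$ proportional to $\sum_k\gamma_k$ and $s$ proportional to $\E S$, which keeps every error term of order at most $\sum_k\gamma_k$, and disposes of the spurious dependence on $\sigma_i^2$ by a dichotomy: if $\sum_k\gamma_k$ is not small the asserted inequality holds trivially, since its right-hand side exceeds $1$, while if $\sum_k\gamma_k$ is small then $\sigma_i^2\le\gamma_i^{2/3}\le(\sum_k\gamma_k)^{2/3}$ is small and $\E S$ is bounded away from $0$. I expect the main obstacle to be exactly this final step — recognizing that $\E[I(a\le W^{(i)}\le b)\,S]$ can be compared to $\E S\cdot\Pro(a\le W^{(i)}\le b)$ via the concentration of the truncated square-sum, and then pushing the optimization and the bookkeeping of truncation losses far enough to land the explicit constants $\tfrac{2\sqrt2}{3}$ and $\tfrac{4(\sqrt2+1)}{3}$; the first two paragraphs are essentially forced by the choice of $f$.
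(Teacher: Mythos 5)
Your architecture is exactly the paper's — the ramp function $f$ with $f'=I(a-\delta<w<b+\delta)$, the Stein-type identity $\E[W^{(i)}f(W^{(i)})]=\sum_{j\ne i}\E[X_j\int_0^{X_j}f'(W^{(i)}_j+t)\,dt]$, the upper bound $\|f\|_\infty\E|W^{(i)}|$, and the lower bound by a truncated quadratic sum on the event $\{a\le W^{(i)}\le b\}$ — and your first two paragraphs are correct. But the final conversion step, which you yourself flag as the main obstacle, is a genuine gap: with the quantities you have set up, no choice of $\delta$ and $s$ can produce the stated constants. Concretely, your truncation $S=\sum_{j\ne i}X_j^2I(|X_j|\le\delta)$ only gives $\E S\ge 1-\sigma_i^2-\delta^{-1}\sum_k\gamma_k$ via Markov, so with $\delta=c\sum_k\gamma_k$ you have $\E S-s\le 1-1/c-s$ even when $\sigma_i^2=0$. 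Forcing the $(b-a)$-coefficient $\tfrac{1/2}{\E S-s}$ to be at most $\tfrac{2\sqrt2}{3}$ requires $\E S-s\ge\tfrac{3}{4\sqrt2}\approx 0.53$, hence $1/c\le 0.47$; but then the $\delta$ in the numerator alone contributes $\tfrac{c}{\E S-s}\ge\tfrac{1/0.47}{0.53}\approx 4$ to the coefficient of $\sum_k\gamma_k$, already exceeding $\tfrac{4(\sqrt2+1)}{3}\approx 3.22$ before the Chebyshev term or the $\sigma_i^2$ term is counted. Moreover the $\sigma_i^2$ you must discard is only $O\big((\sum_k\gamma_k)^{2/3}\big)$, not $O(\sum_k\gamma_k)$, so it cannot be folded into the error term; your dichotomy removes it only above an absolute threshold of $\sum_k\gamma_k$, leaving an intermediate regime where $\sigma_i^2$ can be as large as about $0.46$ and the denominator collapses. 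Since these exact constants feed the Berry--Esseen constant $7.1$ downstream, a weaker inequality does not prove the proposition.

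The paper's proof repairs precisely these three lossy choices. First, it works with $\sum_{j=1}^n|X_j|\min(|X_j|,\delta)$ rather than $\sum_{j\ne i}X_j^2I(|X_j|\le\delta)$: the elementary inequality $\min(a,b)\ge a-\tfrac{a^2}{4b}$ gives $\E\big[|X_j|\min(|X_j|,\delta)\big]\ge\sigma_j^2-\tfrac{1}{4\delta}\gamma_j$, a factor of $4$ better than the Markov bound available for your truncation. Second, it includes the $j=i$ term by subtracting $\E X_if(W^{(i)}-X_i)$ on the left-hand side, which removes the $\sigma_i^2$ hole entirely (the sum of means becomes $\ge 1-\tfrac{1}{4\delta}\sum_j\gamma_j$) at the harmless cost of an extra $\E|X_i|$ in the upper bound, absorbed by Cauchy--Schwarz into $\tfrac{1}{\sqrt2}(b-a+2\delta)$ since $\E|W^{(i)}|+\E|X_i|\le\sqrt{2}$. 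Third, it avoids Chebyshev and the threshold $s$ altogether via the direct $L^1$ estimate $\E[I(a\le W^{(i)}\le b)\,S]\ge P(a\le W^{(i)}\le b)\,\E S-\E|S-\E S|$ with $\E|S-\E S|\le\sqrt{\Var(S)}\le\delta$. Taking $\delta=\sum_j\gamma_j$ then yields $\tfrac34\,P(a\le W^{(i)}\le b)\le\tfrac{1}{\sqrt2}(b-a)+(\sqrt2+1)\sum_j\gamma_j$, which is exactly (\ref{ci}). If you want to salvage your write-up, replace your $S$ by $\sum_{j=1}^n|X_j|\min(|X_j|,\delta)$, keep the $j=i$ term, and drop the Chebyshev step in favour of the variance bound on $S$ itself.
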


\begin{proof}
This proof is a slight variation of that of Lemma 3.1 in Chen, Goldstein and Shao \cite{C11}. Let $\delta > 0$ and let $f$ be given by $f((a+b)/2) = 0$ and $f'(w) = I(a-\delta \le w \le b+\delta)$. Then $|f| \le (b-a+2\delta)/2$. 
Since $X_j$ is independent of $W^{(i)} - X_j$ for $j \neq i$,  $X_i$ is independent of $W^{(i)}$, and since $\E X_j = 0$ for $j=1,\dots,n$, we have
\begin{eqnarray} \notag \label{Stein-id-for-ci}
&&\E W^{(i)}f(W^{(i)})-\E X_if(W^{(i)}-X_i) \\ \notag
&& \quad = \sum_{j=1}^n \E X_j[f(W^{(i)}) - f(W^{(i)}-X_j)] \\ \notag
&& \quad = \sum_{j=1}^n\E X_j\int_{-X_j}^0 f'(W^{(i)}+t)dt \\ \notag
&& \quad = \sum_{j=1}^n\E X_j\int_{-X_j}^0 I(a-\delta \le W^{(i)} + t \le b + \delta)dt \\ \notag
&& \quad \ge \sum_{j=1}^n\E X_j\int_{-X_j}^0 I(a-\delta \le W^{(i)} + t \le b + \delta)I(|t| \le \delta)dt \\ \notag
&& \quad \ge \E I(a \le W^{(i)} \le b )\sum_{j=1}^n X_j\int_{-X_j}^0I(|t| \le \delta)dt \\ \notag
&& \quad = \E I(a \le W^{(i)} \le b )\sum_{j=1}^n |X_j|\min(|X_j|,\delta) \\ \notag
&& \quad \ge P(a \le W^{(i)} \le b )\sum_{j=1}^n \E|X_j|\min(|X_j|,\delta) \\ \notag 
&& \quad \quad -  \E I(a \le W^{(i)} \le b )\big|\sum_{j=1}^n [|X_j|\min(|X_j|,\delta)- \E|X_j|\min(|X_j|,\delta)]\big| \\ \label{bd-R_1-R_2-c-i}
&& \quad = R_1 - R_2,
\end{eqnarray}
where in the first inequality in (\ref{Stein-id-for-ci}), we used the fact that 
$$
X_j\int_{-X_j}^0 I(a-\delta \le W^{(i)} + t \le b + \delta)dt \ge 0 ~ \mathrm{for} ~ j = 1, \cdots,n.
$$
Using the inequality, ${\displaystyle \min(a,b) \ge a - \frac{a^2}{4b}}$ for $a,b>0$, we obtain
\begin{eqnarray} \notag
R_1 &\ge& P(a \le W^{(i)} \le b )\{\sum_{j=1}^n\E X_j^2 - \frac{1}{4\delta}\sum_{j=1}^n \E|X_j|^3\} \\ \label{bd-R_1-c-i}
&=& P(a \le W^{(i)} \le b )\{1 - \frac{1}{4\delta}\sum_{j=1}^n \E|X_j|^3\}.
\end{eqnarray}
We also have
\begin{eqnarray}\notag
R_2 &\le& \E \big|\sum_{j=1}^n [|X_j|\min(|X_j|,\delta)- \E|X_j|\min(|X_j|,\delta)]\big| \\ \notag
&\le& \big[\Var(\sum_{j=1}^n |X_j|\min(|X_j|,\delta))\big]^{1/2} \\ \notag
&\le& \big[\sum_{j=1}^n \E X_j^2\min(|X_j|,\delta)^2\big]^{1/2} \\ \label{bd-R_2-c-i}
&\le& \delta \big[\sum_{j=1}^n \E X_j^2\big]^{1/2} = \delta.
\end{eqnarray}
Bounding the left hand side of (\ref{bd-R_1-R_2-c-i}), we obtain
\begin{eqnarray}  \notag
&&\E W^{(i)}f(W^{(i)}) - \E X_if(W^{(i)}-X_i)  \\ \notag
&& \quad \le \frac{1}{2}(b-a+2\delta)(\E|W^{(i)}|+\E|X_i|) \qquad \qquad \qquad   \\ \notag
&& \quad \le \frac{1}{\sqrt{2}}(b-a+2\delta)[(\E|W^{(i)}|)^2+(\E|X_i|)^2]^{1/2} \\ \notag
&& \quad \le \frac{1}{\sqrt{2}}(b-a+2\delta)[\E(W^{(i)2})+\E(X_i^2)]^{1/2} \\ \label{bd-lhs-c-i}
&& \quad = \frac{1}{\sqrt{2}}(b-a+2\delta).
\end{eqnarray}
The proof of Proposition \ref{prop-ci} is completed by letting ${\displaystyle \delta = \sum_{j=1}^n\E|X_j|^3}$ and combining (\ref{bd-R_1-R_2-c-i}), (\ref{bd-R_1-c-i}), (\ref{bd-R_2-c-i}) and (\ref{bd-lhs-c-i}).
\end{proof}

We now prove Theorem \ref{B-E-thm}. By Proposition \ref{B-E-bd-zero-bias-prop}, we have
\begin{eqnarray} \label{B-E-ci-W-zero-bias} \notag
|P(W \le x) - \Phi(x)| &\le& |P(W\le x) - P(W^{(I)}+X_I^* \le x)| \\ \notag
&& + 2.44\sum_{i=1}^n\gamma_i \\ \notag
&=& \E I(x-X_I\vee X_I^* \le W^{(I)} \le x - X_I\wedge X_I^*) \\ 
&& + 2.44\sum_{i=1}^n\gamma_i.
\end{eqnarray}
Using the independence between $I$ and $\{X_i,X_i^*,1\le i \le n\}$, we have  
\begin{eqnarray*}
&&\E I(x-X_I\vee X_I^* \le W^{(I)} \le x - X_I\wedge X_I^*) \\
&& \quad = \sum_{i=1}^n\sigma_i^2\E I(x-X_i\vee X_i^* \le W^{(i)} \le x - X_i\wedge X_i^*) \\
&& \quad = \sum_{i=1}^n\sigma_i^2\E P(x-X_i\vee X_i^* \le W^{(i)} \le x - X_i\wedge X_i^*|X_i,X_i^*).
\end{eqnarray*}
Since $W^{(i)}$ is indpendent of $X_i,X_i^*$ for $i = 1, \cdots, n$, it follows from (\ref{ci}) that
\begin{eqnarray} \label{ci-W-zero-bias} \notag
&&\E I(x-X_I\vee X_I^* \le W^{(I)} \le x - X_I\wedge X_I^*) \\ \notag
&& \quad \le \sum_{i=1}^n\sigma_i^2\E\Big[\frac{2\sqrt{2}}{3}(|X_i|+|X_i^*|)+ \frac{4(\sqrt{2}+1)}{3}\sum_{i=1}^n\gamma_i\Big] \\ 
&& \quad \le \left(\sqrt{2}+ \frac{4(\sqrt{2}+1)}{3}\right)\sum_{i=1}^n\gamma_i
\le 4.65\sum_{i=1}^n\gamma_i.
\end{eqnarray} 
The proof of Theorem \ref{B-E-thm} is completed by combining (\ref{B-E-ci-W-zero-bias}) and (\ref{ci-W-zero-bias}).

\section{Malliavin calculus}
\subsection{Preamble}
In this paper, the work of Nourdin and Peccati will be presented in the context of the Gaussian process $X=\{\int_0^\infty f(t)dB_t : f \in L^2(\R_+)\}$, where $(B_t)_{t \in \R_+}$ is a standard Brownian motion on acomplete probability space $(\Omega,\mathcal{F}, P)$, where $\mathcal{F}$ is generated by  $(B_t)_{t \in \R_+}$, and $L^2(\R_+)$ is the separable Hilbert space of square integrable real-valued functions with respect to the Lebesgue measure on $\R_+$. This Gaussian process is a centered Gaussian family of random variables with the covariance given by 
$$
\E \big[\int_0^\infty f(t)dB_t\int_0^\infty g(t)dB_t \big] = \langle f,g \rangle_{L^2(\R_+)}.
$$ There will be no loss of generality since problems of interest are of distributional nature and through an isometry these problems can be transferred to~$X$. 
\medskip

More specifically, let $Y=\{Y(h): h \in \mathcal{H} \}$ be a centered Gaussian process over a real separable Hilbert space $\mathcal{H}$ with the covariance given by
$$
\E \big[Y(h_1) Y(h_2)\big] = \langle h_1,h_2 \rangle_\mathcal{H}.
$$
Let $\psi:\mathcal{H}\rightarrow L^2(\R_+)$ be an isometry and let $f_1=\psi(h_1)$ and $f_2=\psi(h_2)$ for $h_1,h_2 \in \mathcal{H}$. Then
$$
\E \big[\int_0^\infty f_1(t)dB_t\int_0^\infty f_2(t)dB_t \big] = \E[Y(h_1)Y(h_2)]. 
$$
This implies that $\law(X) = \law(Y)$ and problems of distributional nature on $Y$ can be transferred to $X$. 
\medskip

The material in this section can be found in Nourdin \cite{N12} and Nourdin and Peccati \cite{NP12}.

\subsection{Multiple Wiener-It\^o integrals and Wiener chaos}

Let $B = (B_t)_{t \in \R_+}$ be a standard Brownian motion on a complete probability space $(\Omega,\mathcal{F}, P)$, where $\mathcal{F}$ is generated by  $(B_t)_{t \in \R_+}$, and let $f \in L^2(\R_+^p)$ where $p$ is a positive integer. We define 
\begin{equation}\label{multiple W-I}
I_p(f) = \sum_{\sigma} \int_0^\infty dB_{t_1}\int_0^{t_1}dB_{t_2}\dots \int_0^{t_{p-1}}dB_{t_p}f(t_{\sigma(1)},t_{\sigma(2)},\dots,t_{\sigma(p)})
\end{equation}
where the sum is over all permutations $\sigma$ of $\{1,2,\dots,p\}$. The random variable $I_p(f)$ is called the {\it $p$th multiple Wiener-It\^o integral}. The closed linear subspace $\mathcal{H}_p$ of $L^2(\Omega)$ generated by $I_p(f)$, $f \in L^2(\R_+^p)$, is called the {\it $p$th Wiener chaos} of $B$. We use the convention that $\mathcal{H}_0 = \R$.
\medskip

If $f$ is symmetric, that is, $f(t_1,\dots,t_p) = f(t_{\sigma(1)},\dots,t_{\sigma(p)})$ for any permutation $\sigma$ of $\{1,\dots,p\}$, then
$$
I_p(f) = p! \int_0^\infty dB_{t_1}\int_0^{t_1}dB_{t_2}\dots \int_0^{t_{p-1}}dB_{t_p}f(t_1,t_2,\dots,t_p).
$$

We define the symmetrization of $f \in L^2(\R_+^p)$ by 
\begin{equation}\label{symmetrization}
\tilde{f}(t_1,\dots,t_p) = \frac{1}{p!}\sum_{\sigma}f(t_{\sigma(1},\dots,t_{\sigma(p)}),
\end{equation}
where the sum is over all permutations $\sigma$ of $\{1,\dots,p\}$. Let $L^2_s(\R^p_+)$ be the closed subspace of $L^2(\R^p_+)$ of symmetric functions. By the triangle inequality,
$$
\|\tilde{f}\|_{L^2(\R^p_+)} \le \|f\|_{L^2(\R^p_+)},
$$ 
we see that $f \in L^2(\R^p_+)$ implies $\tilde{f} \in L^2_s(\R^p_+)$. The following properties of the stochastic integrals $I_p(\cdot)$ can be easily verified: 
~\\
(i) $\E I_p(f) = 0$ and $I_p(f) = I_p(\tilde{f})$ for all $f \in L^2(\R^p_+)$.\\
(ii) For all $f \in L^2(\R^p_+)$ and $g \in L^2(\R^q_+)$,
\begin{equation}\label{ortho-relation} 
\E[I_p(f)I_q(g)] = 
\left\{\begin{array}{lrl}
0 & \text{for} & p \neq q,\\ 
p! \langle \tilde{f},\tilde{g}\rangle_{L^2(\R^p_+)}& \text{for} & p = q. \end{array}\right.
\end{equation} \\
(iii) The mapping $f \mapsto I_p(f)$ from $L^2(\R^p_+)$ to $L^2(\Omega)$ is linear.
\medskip

The multiple Wiener-It\^o integrals are infinite dimensional generalizations of the Hermite polynomials. The {\it $k$th Hermite polynomial} $H_k$ is defined by
$$
H_k(x)= (-1)^k e^{\frac {x^2}{2}} \frac {d^k} {dx^k} \left( e^{-\frac {x^2}{2}}\right), x \in \R.
$$

If $f \in L^2(\R_+)$ such that $\|f\|_{L^2(\R_+)} = 1$, it can be shown that
\begin{equation}\label{isometry1}
I_k(f^{\otimes k})=H_k\left(\int_0^\infty f(t) dB_t\right),
\end{equation}
where $f^{\otimes k} \in L^2(\R^k_+)$ is the $k$th tensor product of $f$ with itself defined by $f(t_1,\dots,t_k) = f(t_1)\dots f(t_k)$. If $\phi=f_1^{\otimes k_1}\otimes\cdots\otimes f_p^{\otimes k_p}$ with $(f_i)_{1\le i\le p}$ an orthonormal system in $L^2(\R_+)$ and $k_1+\cdots+k_p=k$, (\ref{isometry1}) can be extended to
\begin{equation}\label{isometry2}
I_k(\phi)=\prod_{i=1}^p H_{k_i}\left(\int_0^\infty f_i(t) dB_t\right).
\end{equation}

As in one-dimension where the Hermite polynomials form an orthogonal basis for $L^2(\R, \frac{1}{\sqrt{2\pi}}e^{-x^2/2}dx)$, the space $L^2(\Omega)$ can be decomposed into an infinite orthogonal sum of the closed subspaces $\mathcal{H}_p$. We state this fundamental fact about Gaussian spaces as a theorem below.
\begin{thm}
Any random variable $F\in L^2(\Omega)$ admits an orthogonal decomposition of the form
\begin{equation}\label{chaotic-expansion}
F= \sum_{k=0}^\infty I_k(f_k),
\end{equation}
where $I_0(f_0) = \E[F]$, and $f_k\in  L^2(\R^k_+)$ are symmetric and uniquely determined by $F$. 
\end{thm}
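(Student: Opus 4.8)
The plan is to identify $L^2(\Omega)$ with the orthogonal Hilbert-space direct sum $\bigoplus_{k\ge 0}\mathcal{H}_k$ and then read off (\ref{chaotic-expansion}) as the expansion of $F$ in these mutually orthogonal components. One half is already available: by the orthogonality relation (\ref{ortho-relation}), $\mathcal{H}_p\perp\mathcal{H}_q$ in $L^2(\Omega)$ whenever $p\ne q$, so $\sum_{k\ge 0}\mathcal{H}_k$ is an orthogonal algebraic sum inside $L^2(\Omega)$. It therefore suffices to prove that this sum is dense; granting that, the orthogonal projection $F_k$ of $F$ onto $\mathcal{H}_k$ satisfies $F=\sum_{k\ge 0}F_k$ with convergence in $L^2(\Omega)$ and $\sum_k\|F_k\|^2_{L^2(\Omega)}=\|F\|^2_{L^2(\Omega)}<\infty$.

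For the density I would exhibit a total family of random variables lying in the closure of $\bigoplus_k\mathcal{H}_k$, namely the exponential (Wick) functionals. For $0\ne g\in L^2(\R_+)$, the Hermite generating identity $\sum_{k\ge 0}\frac{u^k}{k!}H_k(x)=e^{ux-u^2/2}$, applied with $x=\|g\|_{L^2(\R_+)}^{-1}\int_0^\infty g(t)\,dB_t$ and $u=\|g\|_{L^2(\R_+)}$, together with (\ref{isometry1}) and the homogeneity of $I_k(\cdot)$ from property (iii), yields
\[
\exp\Big(\int_0^\infty g(t)\,dB_t-\tfrac12\|g\|^2_{L^2(\R_+)}\Big)=\sum_{k\ge 0}\frac{1}{k!}\,I_k\big(g^{\otimes k}\big),
\]
the series converging in $L^2(\Omega)$ because $\E\big[I_k(g^{\otimes k})^2\big]=k!\,\|g\|_{L^2(\R_+)}^{2k}$ by (\ref{ortho-relation}) and $\sum_k\|g\|^{2k}/k!<\infty$. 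Hence every such exponential, and every finite linear combination of them (after absorbing the deterministic factors $e^{\|g\|^2/2}$ into the coefficients), belongs to the $L^2$-closure of $\bigoplus_k\mathcal{H}_k$.

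The step I expect to be the main obstacle is the totality of these exponentials in $L^2(\Omega)$. Suppose $X\in L^2(\Omega)$ is orthogonal to $\exp(\int_0^\infty g\,dB)$ for every $g\in L^2(\R_+)$. Specializing to step functions $g$ (and solving a triangular system in their coefficients) shows that $\E\big[X\exp(\sum_{j=1}^m a_j B_{t_j})\big]=0$ for all $m\ge 1$, all $0\le t_1<\cdots<t_m$ and all $(a_j)\in\R^m$; since $|X|\in L^2$ and $\exp(\sum a_j B_{t_j})\in L^2$, this quantity extends to an entire function of $(a_1,\dots,a_m)\in\C^m$ and hence also vanishes at purely imaginary arguments. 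Thus the finite complex measure $A\mapsto\E[X\mathbf{1}_A]$ has vanishing characteristic function under each map $\omega\mapsto(B_{t_1}(\omega),\dots,B_{t_m}(\omega))$, so it annihilates $\sigma(B_{t_1},\dots,B_{t_m})$; as these $\sigma$-fields form a $\pi$-system generating $\mathcal{F}$, a monotone-class argument gives $\E[X\mathbf{1}_A]=0$ for all $A\in\mathcal{F}$, i.e. $X=0$ a.s. Consequently $\bigoplus_k\mathcal{H}_k$ is dense and $L^2(\Omega)=\bigoplus_{k\ge 0}\mathcal{H}_k$.

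It remains to put each projection $F_k$ in the form $I_k(f_k)$ with $f_k$ symmetric and to obtain uniqueness. By definition $\mathcal{H}_k$ is the closed linear span of $\{I_k(f):f\in L^2(\R^k_+)\}$; but $I_k(f)=I_k(\tilde f)$ with $\tilde f\in L^2_s(\R^k_+)$ by property (i) and the bound $\|\tilde f\|\le\|f\|$, while $f\mapsto I_k(f)$ restricted to $L^2_s(\R^k_+)$ is, by (\ref{ortho-relation}), a bounded injection with $\|I_k(f)\|^2_{L^2(\Omega)}=k!\,\|f\|^2_{L^2(\R^k_+)}$ and hence has closed range. Therefore $\mathcal{H}_k=\{I_k(f):f\in L^2_s(\R^k_+)\}$ and the representing symmetric kernel is unique. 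Writing $F_k=I_k(f_k)$ for the unique $f_k\in L^2_s(\R^k_+)$ (and $f_0=\E[F]$, since $\mathcal{H}_0=\R$) gives (\ref{chaotic-expansion}); uniqueness of the $f_k$ follows from uniqueness of the orthogonal projections $F_k$ together with the injectivity of $I_k$ on symmetric kernels.
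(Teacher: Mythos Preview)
Your argument is correct and follows the standard route (density of Wick exponentials plus the isometry $f\mapsto I_k(f)$ on $L^2_s(\R_+^k)$), but there is nothing to compare it against: the paper does not prove this theorem. It is stated there as a ``fundamental fact about Gaussian spaces'' and used without justification, with Nualart's book cited at the end of the section for details. What you have written is essentially the classical proof one finds in that reference, so in a sense you have supplied precisely the argument the paper outsources.

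One small remark on presentation: the ``triangular system'' step is fine but terse. It may be clearer to note directly that for $g=\sum_j c_j\mathbf{1}_{[t_{j-1},t_j)}$ one has $\int_0^\infty g\,dB=\sum_j c_j(B_{t_j}-B_{t_{j-1}})$, and that the map $(c_j)\mapsto(a_j)$ with $a_j B_{t_j}$-coefficients is an invertible linear change of variables; this makes the passage from $\exp(\int g\,dB)$ to $\exp(\sum_j a_j B_{t_j})$ transparent. Otherwise the totality argument, the analytic-continuation step, and the identification $\mathcal{H}_k=\{I_k(f):f\in L^2_s(\R_+^k)\}$ via the closed-range property are all clean.
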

~\\
Applying the orthogonality relation (\ref{ortho-relation}) to the symmetric kernels $f_k$ for $F$ in the Wiener chaos expansion (\ref{chaotic-expansion}),
\begin{equation}\label{norm-expansion}
\|F\|^2_{L^2(\Omega)} =  \sum_{k=0}^\infty k!\|f_k\|^2_{L^2(\R^k_+)}.
\end{equation}

The random variables $I_k(f_k)$ inherit some properties from the algebraic structure of the Hermite polynomials, such as the product formula (\ref{prodfor}) below. To understand this we need the definition of  {\it{contraction}}.
 
\begin{defi}\label{deficontract}
Let $p, q \ge 1$ and let $f\in L^2(\R^p_+)$ and $g\in L^2(\R^q_+)$ be two symmetric functions. For $r \in \{1, \dots, p\wedge q\}$, the $r$th \textit{contraction} of $f$ and $g$, denoted by $f\otimes_r g$, is defined by

$$
f\otimes_r g (x_1,\dots,x_{p-r},y_1,\cdots,y_{q-r}) \qquad \qquad \qquad \qquad \qquad \qquad \qquad \qquad \qquad
$$
$$
= \int_{\R_+^r}f(x_1,\cdots,x_{p-r},t_1,\cdots,t_r)g(y_1,\cdots,y_{q-r},t_1,\cdots,t_r)dt_1\cdots dt_r.
$$
By convention, $f\otimes_0 g$ = $f\otimes g$.
\end{defi}

The contraction $f\otimes_r g$ is not necessarily symmetric, and we denote by $f \widetilde{\otimes}_rg$ its symmetrization. Note that by the Cauchy-Schwarz inequality, 
$$
\|f\otimes_r g\|_{L^2(\R^{p+q-2r}_+)} \le \|f\|_{L^2(\R^p_+)}\|g\|_{L^2(\R^q_+)} \quad \text{for} \quad  r = 0,1,\dots,p\wedge q,
$$
and that $f\otimes_p g = \langle f, g \rangle_{L^2(\R^p_+)}$ when $p=q$.
\medskip

We state the {\it{product formula}} between two multiple Wiener-It\^o integrals in the next theorem.
\begin{thm}
Let $p, q \ge 1$ and let $f\in L^2(\R^p_+)$ and $g\in L^2(\R^q_+)$ be two symmetric functions. Then
\begin{equation}\label{prodfor}
I_p(f) I_q(g)=\sum_{r=0}^{p\wedge q} r! \binom{p}{r} \binom{q}{r} I_{p+q-2r}(f\widetilde{\otimes}_r g).
\end{equation}

\end{thm}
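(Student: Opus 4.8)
The plan is to prove the product formula (\ref{prodfor}) by first reducing, through bilinearity and density, to the case where $f$ and $g$ are tensor powers of single functions, and then to treat that case with the generating function of the Hermite polynomials. Observe first that both sides of (\ref{prodfor}) are bilinear in $(f,g)$: the left side is a product of two linear maps, while on the right each contraction $f\widetilde{\otimes}_r g$ is bilinear in $(f,g)$ by Definition \ref{deficontract} and $I_{p+q-2r}$ is linear. Both sides are also separately continuous: if $f_n\to f$ in $L^2_s(\R^p_+)$ then $I_p(f_n)\to I_p(f)$ in $L^2(\Omega)$ by the isometry (\ref{ortho-relation}), hence $I_p(f_n)I_q(g)\to I_p(f)I_q(g)$ in $L^1(\Omega)$, and $f_n\widetilde{\otimes}_r g\to f\widetilde{\otimes}_r g$ in $L^2$ by Cauchy--Schwarz, so $I_{p+q-2r}(f_n\widetilde{\otimes}_r g)\to I_{p+q-2r}(f\widetilde{\otimes}_r g)$ in $L^2(\Omega)$. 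Since the linear span of $\{a^{\otimes p}:a\in L^2(\R_+)\}$ contains, by the polarization identity, every symmetrized elementary tensor $e_{i_1}\widetilde{\otimes}\cdots\widetilde{\otimes}e_{i_p}$ built from an orthonormal basis $(e_i)$ of $L^2(\R_+)$, and these span $L^2_s(\R^p_+)$, it is enough to establish (\ref{prodfor}) when $f=a^{\otimes p}$ and $g=b^{\otimes q}$ for arbitrary $a,b\in L^2(\R_+)$.

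For such $f,g$, write $X(a)=\int_0^\infty a(t)\,dB_t$. Using (\ref{isometry1}) (applied to $a/\|a\|$ and rescaled) together with the classical identity $\sum_{k\ge 0}t^kH_k(x)/k!=e^{tx-t^2/2}$, I would introduce the stochastic exponential
\begin{equation*}
\mathcal{E}(a):=\sum_{k=0}^\infty\frac{1}{k!}I_k(a^{\otimes k})=\exp\Big(X(a)-\tfrac12\|a\|^2_{L^2(\R_+)}\Big),
\end{equation*}
the series converging in $L^2(\Omega)$ by (\ref{norm-expansion}). Since $X(sa)+X(tb)$ is centered Gaussian with variance $s^2\|a\|^2+t^2\|b\|^2+2st\langle a,b\rangle_{L^2(\R_+)}$, one checks directly the multiplicative relation
\begin{equation*}
\mathcal{E}(sa)\,\mathcal{E}(tb)=\exp\big(st\,\langle a,b\rangle_{L^2(\R_+)}\big)\,\mathcal{E}(sa+tb).
\end{equation*}
I would then expand both sides as $L^1(\Omega)$-convergent double power series in $(s,t)$: the left side equals $\sum_{p,q}\frac{s^pt^q}{p!\,q!}I_p(a^{\otimes p})I_q(b^{\otimes q})$, while on the right I expand the scalar exponential as $\sum_r(st)^r\langle a,b\rangle^r/r!$ and $\mathcal{E}(sa+tb)$ via the binomial expansion of $(sa+tb)^{\otimes n}$ inside $I_n$. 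Matching the coefficient of $s^pt^q$ forces $n=p+q-2r$ and $a$-degree $p-r$; using $a^{\otimes p}\widetilde{\otimes}_r b^{\otimes q}=\langle a,b\rangle^r\,a^{\otimes(p-r)}\widetilde{\otimes}\,b^{\otimes(q-r)}$ and simplifying the resulting factor $\frac{p!\,q!}{r!\,(p+q-2r)!}\binom{p+q-2r}{p-r}=\frac{p!\,q!}{r!\,(p-r)!\,(q-r)!}=r!\binom{p}{r}\binom{q}{r}$ yields exactly (\ref{prodfor}) for $f=a^{\otimes p}$, $g=b^{\otimes q}$.

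The routine but delicate points are: (i) justifying the termwise comparison of the two $(s,t)$-series, which holds because $(s,t)\mapsto\mathcal{E}(sa)\mathcal{E}(tb)$ is entire with values in $L^1(\Omega)$, so its Taylor coefficients are unique; (ii) the density of tensor powers in $L^2_s(\R^p_+)$; and (iii) the purely combinatorial collapse of the binomial factors. The main obstacle is really bookkeeping rather than conceptual: one must carry the binomial coefficients from the expansions of $(sa+tb)^{\otimes n}$ and $e^{st\langle a,b\rangle}$ through the coefficient extraction and verify they reassemble into $r!\binom{p}{r}\binom{q}{r}$. A more hands-on alternative avoids generating functions by first using (\ref{isometry2}) to reduce to kernels built from a common orthonormal system $(f_i)$, writing $I_p(f)I_q(g)=\prod_i H_{k_i}(X(f_i))H_{l_i}(X(f_i))$, applying the Hermite linearization formula $H_kH_l=\sum_r r!\binom{k}{r}\binom{l}{r}H_{k+l-2r}$ coordinatewise, and then re-expressing $\prod_i\big(\cdots\big)$ in terms of the contractions $f\widetilde{\otimes}_r g$; there the obstacle is checking that summing $\sum_i r_i=r$ over the coordinates reproduces precisely the coefficient $r!\binom{p}{r}\binom{q}{r}$.
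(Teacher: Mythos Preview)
The paper does not actually prove this theorem: it is stated as a known structural fact about multiple Wiener--It\^o integrals (the surrounding section cites Nourdin \cite{N12} and Nourdin--Peccati \cite{NP12} for this material) and is then used as a tool in the proof of Theorem~\ref{4th-moment-thm}. So there is no ``paper's own proof'' to compare your proposal against.

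On its own merits, your proposal is a correct and standard route. The reduction by bilinearity, polarization, and density to tensor powers $a^{\otimes p}$, $b^{\otimes q}$ is valid; separate continuity suffices here because bilinearity lets you extend one variable at a time. The generating-function step via the stochastic exponential $\mathcal{E}(a)=\exp\big(X(a)-\tfrac12\|a\|^2\big)$ and its multiplicativity is the classical argument, and the coefficient extraction you describe does collapse to $r!\binom{p}{r}\binom{q}{r}$ exactly as you say. The alternative you sketch at the end, using (\ref{isometry2}) and the one-dimensional Hermite linearization $H_kH_l=\sum_r r!\binom{k}{r}\binom{l}{r}H_{k+l-2r}$ coordinatewise, is the other textbook approach and is closer in spirit to how the expository sources cited by the paper organize the proof.

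One small remark on your continuity step: from Cauchy--Schwarz alone you only know $I_p(f)I_q(g)\in L^1(\Omega)$ a priori, so your $L^1$ limit argument is the right level; that the product is in fact in $L^2$ (indeed in every $L^r$ by hypercontractivity) is a consequence of the formula rather than an input to the proof, and you correctly do not rely on it.
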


\subsection{Malliavin derivatives}

Let $B = (B_t)_{t \in \R_+}$ be a standard Brownian motion on a complete probability space $(\Omega,\mathcal{F}, P)$, where $\mathcal{F}$ is generated by  $B$, and let $X=\{X(h), h\in L^2(\R_+)\}$ where
$X(h) = \int_0^\infty hdB_t$.  The set $X$ is a centered Gaussian family of random variables defined on $(\Omega, \mathcal{F},P)$, with covariance given by
$$
\E[X(h)X(g)]= \langle h,g \rangle_{L^2(\R_+)},
$$
for $h,g \in L^2(\R_+)$. Such a Gaussian family is called an \textit{isonormal Gaussian process} over $L^2(\R_+)$. 
\medskip

Let $\mathcal{S}$ be the set of all cylindrical random variables of the form:
\begin{equation} 
F=g\left( X(\phi _{1}),\ldots ,X(\phi _{n})\right) ,  \label{v3}
\end{equation}
where $n\geq 1$, $g:\mathbb{R}^{n}\rightarrow \mathbb{R}$ is an infinitely
differentiable function such that its partial derivatives have polynomial growth, and $\phi _{i}\in L^2(\R_+)$, $i~=~1,\ldots,n$. It can be shown that the set $\mathcal{S}$ is dense in $L^2(\Omega)$.
The {\it Malliavin derivative}  of $F \in \mathcal{S}$ with respect to $X$ is the element of
$L^2(\Omega ,L^2(\R_+))$ defined as

\begin{equation}\label{defi-malliavin}
DF\;=\;\sum_{i=1}^{n}\frac{\partial g}{\partial x_{i}}\left( X(\phi
_{1}),\ldots ,X(\phi _{n})\right) \phi _{i}.
\end{equation}

In particular, $DX(h)=h$ for every $h\in L^2(\R_+)$. By iteration, one can
define the $m$th derivative $D^{m}F$, which is an element of $L^2(\Omega ,L^2(\R^m_+))$ for every $m\geq 2$, as follows.

\begin{equation}\label{iterate-Malliavin}
D^m F=\sum_{i_1,\cdots,i_m=1}^n \frac{\partial^m g}{\partial x_{i_1}\cdots \partial x_{i_m}}\Big[X(\phi_1),\cdots,X(\phi_n)\Big] \phi_{i_1}\otimes \cdots \otimes \phi_{i_m}.
\end{equation}

The Hilbert space $L^2(\Omega ,L^2(\R^m_+))$ of $L^2(\R^m_+)$-valued functionals of $B$ is endowed with the inner product,
$$
\langle u,v \rangle_{L^2(\Omega,L^2(\R^m_+))} = \E\langle u,v \rangle_{L^2(\R^m_+).}
$$

For $m\geq 1$, it can be shown that $D^m$ is closable from  $\mathcal{S}$ to $L^2(\Omega, \R_+^m)$. So the domain of $D^m$ can be extended to ${\mathbb{D}}^{m,2}$,  the closure of $\mathcal{S}$ with respect to the norm $\Vert \cdot \Vert _{m,2}$, defined by

\begin{equation*}
\Vert F\Vert_{m,2}^{2}\;=\;\E\left[ F^{2}\right] +\sum_{i=1}^{m}\E\left[
\Vert D^{i}F\Vert _{L^2(\R^i_+)}^{2}\right].
\end{equation*}

A random variable $F \in L^2(\Omega)$ having the Wiener chaos expansion (\ref{chaotic-expansion}) is an element of ${\mathbb{D}}^{m,2}$ if and only if the kernels $f_k$, $k = 1,2, \dots$ satisfy
$$
\sum_{k=1}^\infty k^m k! \|f_k\|^2_{L^2(\R^k_+)} < \infty,
$$
in which case,
$$
\E\|D^m F\|^2_{L^2(\R^m_+)} = \sum_{k=m}^\infty (k)_m k! \|f_k\|^2_{L^2(\R^k_+)},
$$
where $(k)_m$ is the falling factorial. In particular, any $F$ having a finite Wiener chaos expansion is an element of ${\mathbb{D}}^{m,2}$ for all $m \ge 1$.
\medskip

The Malliavin derivative $D$, defined in (\ref{defi-malliavin}), obeys the following \textsl{chain rule}. If
$g:\mathbb{R}^{n}\rightarrow \mathbb{R}$ is continuously
differentiable with bounded partial derivatives and if $F=(F_{1},\ldots
,F_{n})$ is such that $F_i \in {\mathbb{D}}^{1,2}$ for $i=1,\dots,n$, then $g(F)\in {\mathbb{D}}^{1,2}$ and
\begin{equation}\label{e:chainrule}
Dg(F)=\sum_{i=1}^{n}\frac{\partial g }{\partial x_{i}}(F)DF_{i}.
\end{equation}

The domain $\D^{1,2}$ can be described in terms of the Wiener chaos decomposition as
\begin{equation}\label{Domain-D}
\D^{1,2}=\Big\{F\,\in L^2(\Omega)\,:\,\,\sum_{k=1}^{\infty }k\|I_k(f_k)\|^2_{L^2(\Omega)}<\infty\Big\}.
\end{equation}

The derivative of $F \in \mathbb{D}^{1,2}$, where $F$ is of the form (\ref{chaotic-expansion}), can be identified with the element of $L^2(\R_+ \times \Omega)$ given by
\begin{equation}\label{dtf}
D_{t}F=\sum_{k=1}^{\infty }kI_{k-1}\left( f_{k}(\cdot ,t)\right) ,\quad t \in \R_+.  
\end{equation}
Here $I_{k-1}(f_k(\cdot,t))$ denotes the Wiener-It\^{o} integral of order $k-1$ with respect to the $k-1$ remaining coordinates after holding $t$ fixed. Since the $f_k$ are symmetric, the choice of the coordinate held fixed does not matter. 
\medskip

The \textit{Ornstein-Uhlenbeck} operator $\LL$ is defined by the following relation
\begin{equation}\label{DiagoL}
\LL(F)=\sum_{k=0}^{\infty }-kI_k(f_k),
\end{equation}
for $F$ represented by (\ref{chaotic-expansion}). It expresses the fact that $\LL$ is diagonalizable with spectrum $-\N$ and the Wiener chaos as eigenspaces. The domain of $\LL$ is
\begin{equation}\label{domaineL}
\mathrm{Dom}(\LL)=\{F\in L^2(\Omega ):\sum_{k=1}^{\infty }k^{2}k!\|f_k\|
_{L^2(\R_+^k)}^{2}<\infty \}=\mathbb{D}^{2,2}\text{.}
\end{equation}
If $F=g(X(h_1),\cdots,X(h_n))$, where $g \in C^2(\R^n)$ with bounded first and second partial derivatives, it can be shown that
\begin{eqnarray}\label{Ornstein-uhlenbeck} \notag
\LL(F)&=&\sum_{i,j=1}^n \frac{\partial^2g}{\partial x_i \partial x_j}(X(h_1),\cdots,X(h_n))\langle h_i,h_j \rangle_{L^2(\R_+)} \\
&& \quad -\sum_{i=1}^n X(h_i)\frac{\partial g}{\partial x_{i}}(X(h_1),\cdots,X(h_n)).
\end{eqnarray}

The operator $\LL^{-1}$, which is called the {\it{pseudo-inverse}} of $\LL$, is defined as follows.
\begin{equation}\label{inverse-DiagoL}
\LL^{-1}(F) =\LL^{-1} \Big[ F-\E[F]\Big]=\sum_{k=1}^\infty -\frac {1}{k} I_k(f_k),
\end{equation}
for $F$ represented by (\ref{chaotic-expansion}). The domain of $\LL^{-1}$ is $\mathrm{Dom}(\LL^{-1}) = L^2(\Omega)$. It is obvious that for any $F \in L^2(\Omega)$, we have $\LL^{-1}F \in \mathbb{D}^{2,2}$ and 
\begin{equation} \label{LL-inverse}
\LL\LL^{-1}F = F - \E[F].
\end{equation}

A crucial property of $\LL$ is the following integration by parts formula. For $F \in \mathbb{D}^{2,2}$ and $G \in \mathbb{D}^{1,2}$, we have 
\begin{equation} \label{int-by-parts-L}
\E[\LL F \times G] = - \E[\langle DF,DG \rangle_{L^2(\R_+)}].
\end{equation}
By the bilinearity of the inner product and the Wiener chaos expansion (\ref{chaotic-expansion}), it suffices to prove (\ref{int-by-parts-L}) for $F = I_p(f)$ and $G = I_q(g)$ with $p,q \ge 1$ and $f \in L^2(\R_+^p)$, $g \in L^2(\R_+^q)$ symmetric. When $p \neq q$, we have
$$
\E[\LL F \times G] = -p\E[I_p(f)I_q(g)] = 0
$$
and
$$
\E[\langle DF,DG \rangle_{L^2(\R_+)}] = pq\int_0^\infty\E[I_{p-1}(f(\cdot,t))I_{q-1}(g(\cdot,t))]dt =0.
$$
So (\ref{int-by-parts-L}) holds in this case. When $p=q$, we have 
$$
\E[(\LL F)G] = -p\E[I_p(f)I_q(g)] = -p p!\langle f,g \rangle_{L^2(\R_+^p)}
$$
and
\begin{eqnarray*}
\E[\langle DF,DG\rangle_{L^2(\R_+)}]&=& 
p^2\int_0^\infty\E[I_{p-1}(f(\cdot,t))I_{q-1}(g(\cdot,t))]dt \\
&=& p^2(p-1)!\int_0^\infty \langle f(\cdot,t),g(\cdot,t)\rangle_{L^2(\R_+^{p-1})} dt \\
&=& pp!\langle f,g \rangle_{L^2(\R_+^p)}.
\end{eqnarray*}
So (\ref{int-by-parts-L}) also holds in this case. This completes the proof of (\ref{int-by-parts-L}).
\medskip

Since $\LL^{-1}(F) \in \text{Dom}(\LL) = \D^{2,2} \subset \D^{1,2}$, for any $F\in \D^{1,2}$ the quantity $\langle DF,-D\LL^{-1} F\rangle_{L^2(\R_+)}$ is well defined. As we can see in the next section, $\langle DF,-D\LL^{-1} F\rangle_{L^2(\R_+)}$ plays a key role in the normal approximation for functionals of Gaussian processes.
\medskip

In this section, we have only presented those aspects of Malliavin calculus that will be needed for our exposition of the work of Nourdin and Peccati in this paper. An extensive treatment of Malliavin calculus can be found in the book by Nualart \cite{N06}.

\section{Connecting Stein's method with Malliavin calculus}

As is {discussed} in Section 2, the Stein operator $L$ for normal approximation is given by $Lf(w)= f'(w) - wf(w)$ and the equation 
\begin{equation}\label{Stein-id-normal}
\E\big[f'(W) - Wf(W)\big] = 0
\end{equation}
holds for all $f \in C_B^1$ if and only if $W \sim \mathcal{N}(0,1)$. It is also remarked there that if $W \sim \mathcal{N}(0,1)$, (\ref{Stein-id-normal}) is a simple consequence of integration by parts. Since there is the integration by parts formula of Malliavin calculus for functionals of general Gaussian processes, there is a natural connection between Stein's method and Malliavin calculus. Indeed, integration by parts has been used in less general situations to construct the equation 
\begin{equation}\label{Stein-general2}
\E [Wf(W)] = E[T f'(W)]
\end{equation}
which is a special case of (\ref{Stein-general}). We provide two examples below.
\medskip

{\it Example 1.} Assume $\E [W] = 0$ and $\Var(W) = 1$. Then we have $\E [T] = 1$.
If $W$ has a density $\rho > 0$ with respect to the Lebesgue measure, then by integration by parts, $W$ satisfies (\ref{Stein-general2}) with $T = h(W)$, where 

$$
h(x) = \frac{\int_x^\infty y\rho(y)dy}{\rho(x)}.
$$
If $\rho$ is the density of $\mathcal{N}(0,1)$, then $h(x)= 1$ and (\ref{Stein-general2}) reduces to (\ref{Stein-id-normal}).
\medskip

{\it Example 2.} Let $X = (X_1, \dots, X_d)$ be a vector of independent Gaussian random 
variables and let $g: \R^d \rightarrow  \R$ be an absolutely continuous function. Let $W = g(X)$. Chatterjee in \cite{C09} used Gaussian interpolation and integration by parts to show that $W$ satisfies (\ref{Stein-general2}) with $T = h(X)$ where
$$
h(x) = \int_0^1 \frac{1}{2\sqrt{t}}\E \big[\sum_{i=1}^d \frac{\partial g}{\partial x_i}(x)\frac{\partial g}{\partial x_i}(\sqrt t x + \sqrt{1 - t}X)\big]dt.
$$
If $d = 1$ and $g$ the identity function, then $W \sim \mathcal{N}(0,1)$,  $h(x)=1$, and again (\ref{Stein-general2}) reduces to (\ref{Stein-id-normal}).
\medskip

As the previous example  shows (see Chatterjee \cite{C09} for details), it is possible to construct the function $h$ when one deals with sufficiently smooth functionals of a Gaussian vector. This is part of a general phenomenon discovered by Nourdin and Peccati in \cite{NP09}. Indeed, consider a functional $F$ of an isonormal Gaussian process $X=\{X(h), h\in L^2(\R_+)\}$ over $L^2(\R_+)$. Assume $F \in \D^{1,2}$, $\E [F] = 0$ and $\Var(F) = 1$.  Let $f: \R \rightarrow \R$ be  a bounded $\mathcal{C}^1$ function having a bounded derivative. Since $\LL^{-1} F \in \text{Dom}(\LL)= \mathbb{D}^{2,2}$, by (\ref{LL-inverse}) and  $\E [F] = 0$, we have 
$$
F = \LL\LL^{-1}F.
$$
Therefore, by the integration by parts formula (\ref{int-by-parts-L}), 
$$
\E [Ff(F)]=\E[\LL\LL^{-1}F \times f(F)] 
= \E[\langle Df(F), -D\LL^{-1}F \rangle_{L^2(\R_+)}]
$$
and by the chain rule,
$$
\E[\langle Df(F),-D\LL^{-1}F\rangle_{L^2(\R_+)}] = \E[f'(F)\langle DF,-D\LL^{-1}F\rangle_{L^2(\R_+)}].
$$
Hence
\begin{equation}\label{Stein-id-inner-prod}
\E [F f(F)] = \E [f'(F)\langle DF,-D\LL^{-1}F\rangle_{L^2(\R_+)}]
\end{equation}
and $F$ satisfies (\ref{Stein-general2}) with $T = \langle DF,-D\LL^{-1}F\rangle_{L^2(\R_+)}$.
\medskip

If $F$ is standard normal, that is, $F = I(\psi) =\int_0^\infty \psi dB_t$ where $\psi = I_{[0,1]}$. Then $DF = I_{[0,1]}$ and by (\ref{inverse-DiagoL}), $\LL^{-1}F = - I(\psi) = - F$. So
\begin{equation}\label{Stein-inner-prod}
\langle DF,-D\LL^{-1}F\rangle_{L^2(\R_+)} = \langle I_{[0,1]}, DF \rangle_{L^2(\R_+)} = \langle I_{[0,1]}, I_{[0,1]} \rangle_{L^2(\R_+)} = 1.
\end{equation}
This and ({\ref{Stein-id-inner-prod}) give
$$
\E Ff(F) = \E f'(F),
$$
which is the characterization equation for the standard normal distribution. 
\medskip

Now let $f_h$ be the unique bounded solution of the Stein equation (\ref{Stein-equation}) where $h: \R \rightarrow \R$ is continuous and $|h| \le 1$. Then $f_h \in \mathcal{C}^1$ and $\|f'_h\|_{\infty} \le 4\|h\|_{\infty} \le 4$, and we have
\begin{eqnarray*}
\E [h(F)] - \E [h(Z)] &=& \E \{f_h'(F)[1 - \langle DF,-D\LL^{-1}F\rangle_{L^2(\R_+)}]\} \\
&=& \E \{f_h'(F)[1 - \E (\langle DF,-D\LL^{-1}F\rangle_{L^2(\R_+)}| F)]\}.
\end{eqnarray*}
Therefore
\begin{eqnarray*}
\sup_{h \in \mathcal{C}, |h| \le 1} |\E [h(F)] - \E [h(Z)]| &\le& \|f'_h\|_{\infty}\E \Big[|1 - \E (\langle DF,-D\LL^{-1}F\rangle_{L^2(\R_+)} | F)|\Big] \\
&\le& 4\E \Big[|1 - \E (\langle DF,-D\LL^{-1}F\rangle_{L^2(\R_+)} | F)|\Big].
\end{eqnarray*}

It follows that
\begin{eqnarray*}
d_{\mathrm{TV}}(\law(F), \mathcal{N}(0,1)) &:=& \frac{1}{2}\sup_{|h|\le 1}|\E [h(F)] - \E [h(Z)]| \\
&=& \frac{1}{2}\sup_{h \in \mathcal{C}, |h| \le 1} |\E [h(F)] - \E [h(Z)]|\\ 
&\le& 2\E \Big[|1 - \E (\langle DF,-D\LL^{-1}F\rangle_{L^2(\R_+)} | F)|\Big].
\end{eqnarray*}
If, in addition, $F \in \mathbb{D}^{1,4}$, then $\langle DF,-D\LL^{-1}F\rangle_{L^2(\R_+)}$ is square-integrable and 
\begin{equation*}
\E \Big[|1 - \E (\langle DF,-D\LL^{-1}F\rangle_{L^2(\R_+)} | F)|\Big] \le \sqrt{\Var[\E (\langle DF,-D\LL^{-1}F\rangle_{L^2(\R_+)}| F)]}.
\end{equation*}

Thus we have the following theorem of Nourdin and Peccati \cite{NP09}.

\begin{thm}\label{bound-TV}
Let $F \in \mathbb{D}^{1,2}$ such that $\E [F] = 0$ and $\Var(F)=1$. Then
\begin{equation}\label{boundtv}
d_{\mathrm{TV}}(\law(F), \mathcal{N}(0,1)) \le 2\E \Big[|1 - \E (\langle DF,-D\LL^{-1}F \rangle_{L^2(\R_+)} | F)|\Big].
\end{equation}
If, in addition, $F \in \mathbb{D}^{1,4}$, then
\begin{equation}\label{boundtvwiener}
d_{\mathrm{TV}}(\law(F), \mathcal{N}(0,1)) \le 2\sqrt{\Var[\E (\langle DF,-D\LL^{-1}F\rangle_{L^2(\R_+)} | F)]}.
\end{equation}
\end{thm}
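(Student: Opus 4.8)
The plan is to assemble the two bounds from three ingredients already in place: the Stein equation together with the boundedness of its solution (Proposition~\ref{prop-Stein-eqn-soln}); the Malliavin integration-by-parts formula (\ref{int-by-parts-L}) together with the chain rule (\ref{e:chainrule}); and an elementary conditioning plus Cauchy--Schwarz step. The whole argument is essentially the computation carried out in the text preceding the statement, organized into a proof.

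First I would record the key identity (\ref{Stein-id-inner-prod}): for $F\in\D^{1,2}$ with $\E[F]=0$ and for $f\in\mathcal{C}^1$ bounded with bounded derivative, one has $\E[Ff(F)]=\E[f'(F)\langle DF,-D\LL^{-1}F\rangle_{L^2(\R_+)}]$. Indeed, since $\E[F]=0$, relation (\ref{LL-inverse}) gives $F=\LL\LL^{-1}F$ with $\LL^{-1}F\in\mathrm{Dom}(\LL)=\D^{2,2}$; the chain rule (\ref{e:chainrule}) gives $f(F)\in\D^{1,2}$ with $Df(F)=f'(F)\,DF$; and (\ref{int-by-parts-L}), applied with $\LL^{-1}F$ as the $\D^{2,2}$-argument and $f(F)$ as the $\D^{1,2}$-argument, yields the identity. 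I also need the normalization $\E[\langle DF,-D\LL^{-1}F\rangle_{L^2(\R_+)}]=1$; this follows at once from (\ref{int-by-parts-L}) with $G=\LL^{-1}F$ and the choice of the other argument equal to $F$, giving $\E[\langle DF,D\LL^{-1}F\rangle_{L^2(\R_+)}]=-\E[\LL\LL^{-1}F\cdot F]=-\E[F^2]=-\Var(F)=-1$.

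Next, for continuous $h$ with $|h|\le 1$, let $f_h$ be the bounded solution (\ref{Stein-eqn-soln}) of the Stein equation (\ref{Stein-equation}); by Proposition~\ref{prop-Stein-eqn-soln} it is $\mathcal{C}^1$ with $\|f_h'\|_\infty\le 4\|h\|_\infty\le 4$, so it is an admissible test function above. Substituting $F$ into (\ref{Stein-equation}) and then using the identity just proved,
\begin{equation*}
\E[h(F)]-\E[h(Z)]=\E[f_h'(F)-Ff_h(F)]=\E\big\{f_h'(F)\big(1-\langle DF,-D\LL^{-1}F\rangle_{L^2(\R_+)}\big)\big\}.
\end{equation*}
Since $f_h'(F)$ is $\sigma(F)$-measurable, I replace the inner product by its conditional expectation given $F$, bound $|f_h'(F)|\le 4$, and take absolute values inside the expectation; then the representation $d_{\mathrm{TV}}(\law(F),\mathcal{N}(0,1))=\tfrac12\sup_{h\in\mathcal{C},\,|h|\le1}|\E h(F)-\E h(Z)|$ established earlier via Lusin's theorem gives (\ref{boundtv}). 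For (\ref{boundtvwiener}), when $F\in\D^{1,4}$ a Cauchy--Schwarz estimate in $L^2(\R_+)$ together with standard Meyer-inequality bounds on $D\LL^{-1}$ shows $Y:=\langle DF,-D\LL^{-1}F\rangle_{L^2(\R_+)}\in L^2(\Omega)$, hence $\E[Y\mid F]\in L^2(\Omega)$ with $\E[\,\E[Y\mid F]\,]=1$; then $\E|1-\E[Y\mid F]|\le(\E(1-\E[Y\mid F])^2)^{1/2}=\Var(\E[Y\mid F])^{1/2}$, and combining with (\ref{boundtv}) gives (\ref{boundtvwiener}).

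I expect the main obstacle to be the rigorous bookkeeping of the integration-by-parts step: verifying that $\LL^{-1}F$ and $f_h(F)$ lie in the domains required by (\ref{int-by-parts-L}) and that the chain rule (\ref{e:chainrule}) is legitimately applied to the merely $\mathcal{C}^1$ Stein solution $f_h$ — handled by its bounded derivative and, if one wants full rigor, by smoothing $f_h$ and passing to the limit — together with the $L^2$-integrability of $\langle DF,-D\LL^{-1}F\rangle_{L^2(\R_+)}$ needed for the second bound, which relies on the boundedness of $D\LL^{-1}$ on the relevant Sobolev space rather than on anything recalled explicitly in the excerpt.
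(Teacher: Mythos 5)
Your proposal is correct and follows essentially the same route as the paper: the identity (\ref{Stein-id-inner-prod}) via $F=\LL\LL^{-1}F$, integration by parts (\ref{int-by-parts-L}) and the chain rule, then the Stein solution bound $\|f_h'\|_\infty\le 4$ with the continuous-$h$ representation of total variation, conditioning on $F$, and Cauchy--Schwarz for the second bound. Your explicit verification of the normalization $\E[\langle DF,-D\LL^{-1}F\rangle_{L^2(\R_+)}]=1$ (needed so that $\E|1-\E[Y\mid F]|\le\sqrt{\Var(\E[Y\mid F])}$) is a detail the paper leaves implicit, but otherwise the arguments coincide.
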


If $F$ is standard normal, (\ref{Stein-inner-prod}) implies that the upper bound in (\ref{boundtv}) is zero. This shows that the bound is tight.

\section{The fourth moment theorem}
\subsection{The fourth moment phenomenon}

The so-called fourth moment phenomenon was first discovered by Nualart and Peccati \cite{NP05} who proved that for a sequence of multiple Wiener-It\^o integrals $\{F_n\}$ of fixed order such that $\E[F_n^2] \rightarrow 1$, the following are equivalent. \\
\\
(i) $\law(F_n) \rightarrow \mathcal{N}(0,1)$; \\
(ii) $\E[F_n^4] \rightarrow 3$.
\medskip

Combining Stein's method with Malliavin calculus, Nourdin and Peccati \cite{NP09} obtained an elegant bound on the rate of convergence, which we will call {\it{the fourth moment theorem}}.

\begin{thm}\label{4th-moment-thm}
Let $F$ belong to the $k$th Wiener chaos of $B$ for $k \ge 2$ such that $\E [F^2] = 1$. Then
\begin{equation}\label{4th-moment-bd}
d_{\mathrm{TV}}(\law(F), \mathcal{N}(0,1))\le 2\sqrt{\frac{k-1}{3k}}\sqrt{\E[F^4] - 3}.
\end{equation}
\end{thm}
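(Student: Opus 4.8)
The plan is to feed an explicit Wiener chaos computation into Theorem~\ref{bound-TV}. Write $F=I_k(f)$ with $f\in L^2_s(\R^k_+)$ symmetric; since $\E[F^2]=k!\,\|f\|^2_{L^2(\R^k_+)}=1$ we have $\|f\|^2_{L^2(\R^k_+)}=1/k!$. As $F$ lies in a single Wiener chaos it has moments of all orders and belongs to $\mathbb{D}^{1,4}$, so \eqref{boundtvwiener} is available. By \eqref{inverse-DiagoL}, $\LL^{-1}F=-\tfrac1k F$, hence $-D\LL^{-1}F=\tfrac1k DF$ and $\langle DF,-D\LL^{-1}F\rangle_{L^2(\R_+)}=\tfrac1k\|DF\|^2_{L^2(\R_+)}$. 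Since conditioning cannot increase variance, \eqref{boundtvwiener} gives
$$
d_{\mathrm{TV}}(\law(F),\mathcal{N}(0,1))\le \frac{2}{k}\sqrt{\Var\!\left(\|DF\|^2_{L^2(\R_+)}\right)},
$$
so it suffices to prove $\Var\!\big(\|DF\|^2_{L^2(\R_+)}\big)\le \tfrac{k(k-1)}{3}\big(\E[F^4]-3\big)$.

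Next I would express $\E[F^4]-3$ through the same quantity. Applying \eqref{Stein-id-inner-prod} with $f(x)=x^3$ — legitimate after a routine truncation argument, $F$ having all moments — and again using $\langle DF,-D\LL^{-1}F\rangle_{L^2(\R_+)}=\tfrac1k\|DF\|^2_{L^2(\R_+)}$, one gets $\E[F^4]=\tfrac3k\,\E[F^2\|DF\|^2_{L^2(\R_+)}]$. Now write both factors in Wiener chaos. From \eqref{dtf}, $D_tF=kI_{k-1}(f(\cdot,t))$; applying the product formula \eqref{prodfor} to $I_{k-1}(f(\cdot,t))^2$, integrating in $t$, and using $\int_0^\infty f(\cdot,t)\otimes_r f(\cdot,t)\,dt=f\otimes_{r+1}f$ together with $I_m(h)=I_m(\tilde h)$ gives
$$
\|DF\|^2_{L^2(\R_+)}=k^2\sum_{s=1}^{k}(s-1)!\binom{k-1}{s-1}^2 I_{2k-2s}(f\widetilde{\otimes}_s f),
$$
whose $s=k$ term equals $k=\E[\|DF\|^2_{L^2(\R_+)}]$; set $G:=\|DF\|^2_{L^2(\R_+)}-k$, the sum over $s=1,\dots,k-1$. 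Likewise \eqref{prodfor} gives $F^2-1=\sum_{r=0}^{k-1}r!\binom{k}{r}^2 I_{2k-2r}(f\widetilde{\otimes}_r f)$. Since $\E[G]=0$ and chaoses of distinct orders are orthogonal, the $r=0$ term drops out and \eqref{ortho-relation} yields $\E[F^4]-3=\tfrac3k\E[(F^2-1)G]=\tfrac3k\sum_{r=1}^{k-1}c_r\|f\widetilde{\otimes}_r f\|^2_{L^2(\R^{2k-2r}_+)}$ and $\Var(\|DF\|^2_{L^2(\R_+)})=\E[G^2]=\sum_{r=1}^{k-1}c_r'\|f\widetilde{\otimes}_r f\|^2_{L^2(\R^{2k-2r}_+)}$, where $c_r=r!\binom{k}{r}^2k^2(r-1)!\binom{k-1}{r-1}^2(2k-2r)!$ and $c_r'=\big(k^2(r-1)!\binom{k-1}{r-1}^2\big)^2(2k-2r)!$.

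Finally I would compare the two sums term by term: all summands are nonnegative, so it is enough to show $c_r'\le (k-1)c_r$ for $r=1,\dots,k-1$. Cancelling the common positive factor, this reads $k^2(r-1)!\binom{k-1}{r-1}^2\le (k-1)r!\binom{k}{r}^2$, and substituting $\binom{k}{r}=\tfrac{k}{r}\binom{k-1}{r-1}$ collapses it to $1\le (k-1)/r$, valid precisely because $r\le k-1$ (with equality at $r=k-1$). Hence $\E[G^2]\le (k-1)\E[(F^2-1)G]=\tfrac{k(k-1)}{3}(\E[F^4]-3)$ — which incidentally reproves $\E[F^4]\ge 3$ — and substituting into the first display yields $d_{\mathrm{TV}}(\law(F),\mathcal{N}(0,1))\le \tfrac2k\sqrt{\tfrac{k(k-1)}{3}(\E[F^4]-3)}=2\sqrt{\tfrac{k-1}{3k}}\sqrt{\E[F^4]-3}$. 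The only genuinely non-routine point is the middle paragraph: the integration-by-parts identity converts $\E[F^4]-3$ into $\tfrac3k\,\E[(F^2-1)(\|DF\|^2-k)]$, and the random variables $\|DF\|^2-k$ and $F^2-1$ live on the same finite family of Wiener chaoses with coefficients differing only by the elementary factor above — once this structural picture is in hand the inequality is forced. Justifying the chaotic expansion of $\|DF\|^2$ (Fubini and the symmetrisation bookkeeping) is standard and I would relegate it.
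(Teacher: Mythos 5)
Your proposal is correct and follows essentially the same route as the paper's proof: it invokes the Malliavin--Stein bound \eqref{boundtvwiener}, uses $\LL^{-1}F=-\tfrac1k F$, expands $\|DF\|^2_{L^2(\R_+)}$ and $F^2$ via the product formula \eqref{prodfor}, converts $\E[F^4]-3$ through the Stein identity with $f(x)=x^3$, and compares the two chaos expansions coefficientwise, the key inequality reducing to $r\le k-1$ exactly as in \eqref{Var-DF-norm2}--\eqref{Var-ineq}. The only differences are cosmetic (centering the argument on $G=\|DF\|^2-k$ and $F^2-1$ rather than on the raw expansions), so no further comment is needed.
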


\begin{proof}
This proof is taken from Nourdin \cite{N12}. Write $F = I_k(f_k)$ where $f_k \in L^2_s(\R^k)$ is symmetric. By (\ref{norm-expansion}), $\E[F^2] = k!\|f_k\|^2_{L^2(\R_+)}$. By the equation (\ref{dtf}), we have $D_tF =D_tI_k(f_k)=k I_{k-1}(f_k(\cdot,t))$. Applying the product formula (\ref{prodfor}) for multiple integrals, we obtain
\begin{eqnarray}\label{DF-norm2} \notag
\frac{1}{k}\|DF\|^2_{L^2(\R_+)} &=& \frac{1}{k}\langle D F, D F\rangle_{L^2(\R_+)} \\ \notag
&=& k \int_0^\infty I_{k-1}(f_k(\cdot,t))^2 dt\\ \notag
&=&k \int_0^\infty \sum_{r=0}^{k-1}r! \binom{k-1}{r}^2 I_{2k-2-2r}\big[f_k(\cdot,t)\widetilde{\otimes}_r f_k(\cdot,t)\big]dt\\ \notag
&=&k\sum_{r=0}^{k-1}r! \binom{k-1}{r}^2 I_{2k-2-2r}\left(\int_0^\infty f_k(\cdot,t)\widetilde{\otimes}_r f_k(\cdot,t)dt\right)\\ \notag
&=&k\sum_{r=0}^{k-1}r! \binom{k-1}{r}^2 I_{2k-2-2r}\left(f_k\widetilde{\otimes}_{r+1} f_k\right)\\ \notag
&=&k \sum_{r=1}^{k} (r-1)! \binom{k-1}{r-1}^2 I_{2k-2r}\left(f_k\widetilde{\otimes}_r f_k\right)\\ \notag
&=&k \sum_{r=1}^{k-1} (r-1)! \binom{k-1}{r-1}^2I_{2k-2r}\left(f_k\widetilde{\otimes}_r f_k\right)+ k!\Vert f\Vert_{L^2(\R_+)}^2 \\ 
&=&k \sum_{r=1}^{k-1} (r-1)! \binom{k-1}{r-1}^2I_{2k-2r}\left(f_k\widetilde{\otimes}_r f_k\right)+ \E[F^2].
\end{eqnarray}
Note that since  $F = I_k(f_k)$ and $\E[F] = 0$, we have $\LL^{-1}F = -\frac{1}{k}F$. So 
$$
\langle DF,-D \LL^{-1} F\rangle_{L^2(\R_+)} = \frac{1}{k}\langle D F, D F\rangle_{L^2(\R_+)} = \frac{1}{k}\|DF\|^2_{L^2(\R_+)}.
$$
Letting $f(F) = F$ in the Stein identity (\ref{Stein-id-inner-prod}), we obtain
$$
\E \big[\langle DF,-D \LL^{-1} F\rangle_{L^2(\R_+)}\big]  = \E[F^2].
$$
Applying the orthogonality of the Wiener chaos and the formula (\ref{ortho-relation}),
\begin{equation} \notag
\Var\left(\langle DF,-D \LL^{-1} F\rangle_{L^2(\R_+)}\right) 
= \Var \left(\frac{1}{k}\|DF\|^2_{L^2(\R_+)}\right)  
\end{equation}
\begin{equation}  \label{Var-DF-norm2} 
=\sum_{r=1}^{k-1}\frac{r^2}{k^2}(r!)^2\binom{k}{r}^4(2k-2r)!\Vert f_k\widetilde{\otimes}_r f_k\Vert_{L^2(\R^{2k-2r})}^2.  
\end{equation}
By the product formula (\ref{prodfor}) again, we have
\begin{equation} \label{F2}
F^2=\sum_{r=0}^k r! \binom{k}{r}^2 I_{2k-2r}(f_k\widetilde{\otimes}_r f_k).
\end{equation}
Applying the Stein identity (\ref{Stein-id-inner-prod}), we have 
\begin{eqnarray}\label{EF4} \notag
\E[F^4] = \E[F \times F^3] &=& 3\E \big[F^2 \times \langle DF,-D\LL^{-1}F\rangle_{L^2(\R_+)}\big] \\ 
&=& 3\E \big[F^2 \times \frac{1}{k}\|DF\|^2_{L^2(\R_+)}\big].
\end{eqnarray}
This together with (\ref{DF-norm2}), (\ref{F2}) and the formula (\ref{ortho-relation}) yield
\begin{eqnarray}\label{ExpF4} \notag
\E[F^4]&=& 3(\E[F^2])^2+\frac{3}{k}\sum_{r=1}^{k-1}r (r!)^2 \binom{k}{r}^4 (2k-2r)!\Vert f_k \widetilde{\otimes}_r f_k \Vert^2_{L^2(\R^{2k-2r}_+)}\\
&=& 3 + \frac{3}{k}\sum_{r=1}^{k-1}r (r!)^2 \binom{k}{r}^4(2k-2r)!\Vert f_k \widetilde{\otimes}_r f_k \Vert^2_{L^2(\R^{2k-2r}_+)}.
\end{eqnarray}
Comparing (\ref{Var-DF-norm2}) and (\ref{ExpF4}) leads to 
\begin{equation}\label{Var-ineq}
\Var\left(\langle DF,-D \LL^{-1} F\rangle_{L^2(\R_+)} \right) \le \frac{k-1}{3k}\left(\E[F^4] - 3 \right).
\end{equation}
Since $\Var[\E(\langle DF,-D\LL^{-1}F\rangle_{L^2(\R_+)} | F)] \le \Var\left(\langle DF,-D \LL^{-1} F\rangle_{L^2(\R_+)} \right)$, Theorem \ref{4th-moment-thm} follows from (\ref{boundtvwiener}).
\end{proof}
As one can see from (\ref{ExpF4}), $\E[F^4] \ge 3$ whenever $F$ is a multiple Wiener-It\^o integral with variance $1$. Theorem \ref{4th-moment-thm} also implies the result of Nualart and Peccati \cite{NP05} mentioned above. Without loss of generality, we assume that $\E[F_n^2] = 1$. The part of (ii) $\Longrightarrow$ (i) follows immediately from (\ref{4th-moment-bd}). For the part of (i) $\Longrightarrow$ (ii) (which actually is independent of Theorem \ref{4th-moment-thm}), we observe that by the continuous mapping theorem, we have $\law(F_n^4) \rightarrow \law(Z^4)$ where $Z\sim \mathcal{N}(0,1)$. Write $F_n = I_k(f_n)$. By the hypercontractivity inequality (Nelson \cite{N73}),
\begin{equation} \notag
\E[|I_k(f)|^r] \le [(r-1)^kk!]^r\|f\|^r_{L^2(\R^k_+)} \quad \text{for} \quad k\ge 1, r \ge 2, f \in L^2(\R_+^k),
\end{equation}
and the given condition that $k!\|f_n\|^2_{L^2(\R^k_+)} = \E[I_k(f_n)^2]=\E[F_n^2] = 1$, we have $\sup_n \E[|F_n|^r] < \infty$ for $r > 4$. This implies that $\{F_n^4\}$ is uniformly integrable and therefore $\E[F_n^4] \rightarrow \E[Z^4] = 3$, and (ii) follows.
\medskip

From (\ref{ExpF4}), we observe that (ii) is equivalent to $\Vert f_k \widetilde{\otimes}_r f_k \Vert^2_{L^2(\R^{2k-2r}_+)} \rightarrow 0$ for $r = 1,\cdots,k-1$. This fact is also contained in the theorem of Nualart and Peccati \cite{NP05}. The equation (\ref{ExpF4}) also shows that the calculation of $\E[F^4] - 3$ depends on that of $\Vert f_k \widetilde{\otimes}_r f_k \Vert^2_{L^2(\R^{2k-2r}_+)}$ for $r = 1,\cdots,k-1$.
\medskip

In more recent work, Nourdin and Peccati \cite{NP13} proved the following {\it{optimal fourth moment theorem}}, which improves Theorem \ref{4th-moment-thm}. 

\begin{thm}\label{optimal-4th-moment-thm}
Let $\{F_n\}$ be a sequence of random variables living in a Wiener chaos of fixed order such that $\E[F_n^2] =1$. Assume that $F_n$ converges to $Z \sim \mathcal{N}(0,1)$, in which case $\E[F_n^3] \rightarrow 0$ and $\E[F_n^4]\rightarrow 3$. Then there exist two finite constants, $0 < c < C$, possibly depending on the order of the Wiener chaos and on the sequence $\{F_n\}$, but not on $n$, such that 
\begin{equation}\label{optimal-4th-moment-bd}
cM(F_n)\le d_{\mathrm{TV}}(\law(F_n), \mathcal{N}(0,1))\le CM(F_n),
\end{equation}
where $M(F_n)=\max\{\E[F_n^4]-3,|\E[F_n^3]|\}$. 
\end{thm}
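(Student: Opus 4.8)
The plan is to prove matching upper and lower bounds for $d_{\mathrm{TV}}(\law(F_n),\mathcal{N}(0,1))$ in terms of the third and fourth cumulants $\kappa_3 := \E[F_n^3]$ and $\kappa_4 := \E[F_n^4]-3$, and then to note that these two quantities together are comparable to $M(F_n)$. First I would record the relevant identities. Write $F_n = I_k(f_n)$ with $k!\,\Vert f_n\Vert^2_{L^2(\R^k_+)}=1$, and set $\Gamma_n := \langle DF_n,-D\LL^{-1}F_n\rangle_{L^2(\R_+)}=\tfrac1k\Vert DF_n\Vert^2_{L^2(\R_+)}$. The Stein identity (\ref{Stein-id-inner-prod}) applied to $f(x)=x^2$ gives $\kappa_3 = 2\,\mathrm{Cov}(F_n,\Gamma_n)$, whence $|\kappa_3|\le 2\sqrt{\Var(\Gamma_n)}$ by Cauchy--Schwarz; the inequality (\ref{Var-ineq}) reads $\kappa_4\ge \tfrac{3k}{k-1}\Var(\Gamma_n)$, and comparing (\ref{Var-DF-norm2}) with (\ref{ExpF4}) term by term in the contraction norms $\Vert f_n\widetilde{\otimes}_r f_n\Vert$ gives the reverse bound $\kappa_4\le 3k\,\Var(\Gamma_n)$. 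Hence $\Var(\Gamma_n)$ and $\kappa_4$ are comparable up to constants depending only on $k$, $|\kappa_3|\le C_k\sqrt{\kappa_4}$, and $M(F_n)\le |\kappa_3|+\kappa_4\le 2M(F_n)$. That $\kappa_3\to 0$ and $\kappa_4\to 0$ under convergence to $Z$ follows from the hypercontractivity/uniform-integrability argument already used after Theorem \ref{4th-moment-thm}. So it suffices to prove $d_{\mathrm{TV}}(\law(F_n),\mathcal{N}(0,1))\asymp |\kappa_3|+\kappa_4$.

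The naive estimate (\ref{boundtvwiener}) only gives $d_{\mathrm{TV}}\le 2\sqrt{\Var(\Gamma_n)}=O(\sqrt{\kappa_4})$, which is not good enough: the sharp rate is linear in $\kappa_4$, as one already sees for symmetric elements of the second Wiener chaos, for which $\kappa_3=0$ and the true order of the distance is $\kappa_4$, not $\sqrt{\kappa_4}$. I would therefore work directly with the density. A nonzero element of a fixed Wiener chaos has a $\mathcal{C}^\infty$ density $p_n$, and Malliavin calculus provides a closed formula for $p_n$ in terms of $g_n(x):=\E[\Gamma_n\mid F_n=x]$ (for $Z$ one has $g_Z\equiv 1$ by (\ref{Stein-inner-prod}), and the formula returns the standard normal density $\varphi$). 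Using this formula together with the moment bounds coming from hypercontractivity, I would establish an Edgeworth-type expansion
$$
p_n(x) = \varphi(x)\Bigl(1 + \tfrac{\kappa_3}{6}H_3(x) + \tfrac{\kappa_4}{24}H_4(x)\Bigr) + \rho_n(x),\qquad \int_{\R}|\rho_n(x)|\,dx = o\bigl(|\kappa_3|+\kappa_4\bigr),
$$
where $H_3,H_4$ are Hermite polynomials; the main term records the first two nontrivial cumulants of $F_n$, and the remainder $\rho_n$ is estimated, through repeated use of the integration-by-parts formula (\ref{int-by-parts-L}) and the product formula (\ref{prodfor}), in terms of higher contraction norms of $f_n$, hence of order smaller than $|\kappa_3|+\kappa_4$.

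Granting this expansion, both bounds follow at once. Since $H_3$ is odd and $H_4$ even while $\varphi$ is even, splitting the integral over $(-\infty,0]$ and $[0,\infty)$ and using $|p+q|+|p-q|=2\max\{|p|,|q|\}$ yields, for all reals $a,b$,
$$
\max\bigl\{|a|\,\Vert\varphi H_3\Vert_{L^1},\ |b|\,\Vert\varphi H_4\Vert_{L^1}\bigr\}\ \le\ \Vert\varphi(aH_3+bH_4)\Vert_{L^1}\ \le\ |a|\,\Vert\varphi H_3\Vert_{L^1}+|b|\,\Vert\varphi H_4\Vert_{L^1},
$$
so with $a=\kappa_3/6$ and $b=\kappa_4/24$ the leading term has $L^1$-norm comparable to $|\kappa_3|+\kappa_4$, with no cancellation between the two cumulant contributions. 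Therefore
$$
d_{\mathrm{TV}}(\law(F_n),\mathcal{N}(0,1))\ =\ \tfrac12\int_{\R}|p_n-\varphi|\ =\ \tfrac12\bigl\Vert\varphi\bigl(\tfrac{\kappa_3}{6}H_3+\tfrac{\kappa_4}{24}H_4\bigr)\bigr\Vert_{L^1}\ +\ O\bigl(\Vert\rho_n\Vert_{L^1}\bigr),
$$
which is at most $C(|\kappa_3|+\kappa_4)\le 2C\,M(F_n)$ and at least $c(|\kappa_3|+\kappa_4)-o(M(F_n))\ge c'\,M(F_n)$ for $n$ large; adjusting the constants over the finitely many remaining indices (for which $M(F_n)>0$ and $d_{\mathrm{TV}}>0$, since a nonzero element of a fixed chaos of order $\ge 2$ is never Gaussian) gives (\ref{optimal-4th-moment-bd}) for all $n$.

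The main obstacle is the remainder bound $\Vert\rho_n\Vert_{L^1}=o(|\kappa_3|+\kappa_4)$: one must control $p_n$ and a few of its derivatives in terms of the cumulants with an error genuinely of smaller order than the leading Edgeworth term, which is the substantive Malliavin-calculus content of the theorem. It is also the reason the constants $c,C$ are allowed to depend on the sequence: when $|\kappa_3|$ is anomalously small relative to $\sqrt{\kappa_4}$, so that $M(F_n)=\kappa_4$, ruling out $\Vert\rho_n\Vert_{L^1}\gg\kappa_4$ forces one to renormalise and pass to subsequences, which does not produce a constant uniform over all such sequences. A secondary, more conceptual point is that the improvement from $\sqrt{\kappa_4}$ to $\kappa_4$ cannot be obtained from bounded test functions and the Stein equation alone --- smoothing an indicator in order to apply a higher-order expansion loses a factor that defeats the gain --- so passing to the density is essential rather than a matter of convenience.
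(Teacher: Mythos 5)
First, a point of comparison: the paper itself gives no proof of Theorem \ref{optimal-4th-moment-thm}; it is stated as a result of Nourdin and Peccati \cite{NP13}, so there is no in-paper argument to measure yours against. Your preparatory material is correct and is genuinely part of the standard toolkit here: the identity $\kappa_3=2\,\mathrm{Cov}(F_n,\Gamma_n)$ from (\ref{Stein-id-inner-prod}) with $f(x)=x^2$; the two-sided comparison $\tfrac{3k}{k-1}\Var(\Gamma_n)\le\kappa_4\le 3k\,\Var(\Gamma_n)$ read off term by term from (\ref{Var-DF-norm2}) and (\ref{ExpF4}) (the coefficient ratio is $3k/r$ for $r=1,\dots,k-1$, so this checks out); the consequence $|\kappa_3|\le C_k\sqrt{\kappa_4}$ and the equivalence of $|\kappa_3|+\kappa_4$ with $M(F_n)$; and the parity argument showing that the $H_3$ and $H_4$ contributions cannot cancel in $L^1(\varphi\,dx)$. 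Your diagnosis that (\ref{boundtvwiener}) only gives $O(\sqrt{\kappa_4})$ and that a genuinely second-order argument is needed is also exactly right.

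The proposal is nevertheless not a proof, for two reasons. The first is that its entire substance is the asserted expansion $p_n=\varphi\bigl(1+\tfrac{\kappa_3}{6}H_3+\tfrac{\kappa_4}{24}H_4\bigr)+\rho_n$ with $\|\rho_n\|_{L^1}=o(|\kappa_3|+\kappa_4)$; you acknowledge that this remainder bound is ``the substantive Malliavin-calculus content of the theorem,'' but that concession means both inequalities in (\ref{optimal-4th-moment-bd}) remain unproved. The second is that the route through densities starts from a false premise: a nonzero element of a fixed Wiener chaos of order $k\ge 2$ need not have a $\mathcal{C}^\infty$, or even bounded, density. Already in the second chaos, $F=(N^2-1)/\sqrt{2}$ with $N\sim\mathcal{N}(0,1)$ has $\E[F^2]=1$ and a density that blows up at the left endpoint of its support, and the formula for the density in terms of $g_n(x)=\E[\Gamma_n\mid F_n=x]$ is valid only on the (possibly half-line) support where $g_n>0$; a pointwise Edgeworth expansion valid on all of $\R$ with uniform $L^1$ control of the remainder cannot be taken for granted. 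The actual argument of \cite{NP13} avoids densities entirely: one applies the integration by parts (\ref{int-by-parts-L}) a second time to write $\E[h(F_n)]-\E[h(Z)]=\E\bigl[f_h''(F_n)\,\langle DF_n,-D\LL^{-1}(\Gamma_n-1)\rangle_{L^2(\R_+)}\bigr]$, identifies the mean of the new bracket as $\kappa_3/2$, and then proves --- and this is the hard contraction-norm estimate that carries the whole theorem --- that its fluctuation about $\kappa_3/2$ is of order $\kappa_4$ rather than $\sqrt{\kappa_4}$; the matching lower bound is obtained from explicit bounded test functions together with hypercontractivity to de-truncate the third and fourth moments. None of these estimates, nor any substitute for them, appears in the proposal.
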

This shows that 
the bound in (\ref{4th-moment-bd}) is optimal if and only if $\sqrt{\E[F_n^4]-3}$ and $|\E[F_n^3]|$ are of the same order (typically $\frac{1}{\sqrt{n}}$).

\subsection{Breuer-Major theorem}
In this subsection, we show how the fourth moment theorem, that is, Theorem \ref{4th-moment-thm}, can be applied to prove the Breuer-Major theorem \cite{BM83}. We begin by first introducing the notion of {\it Hermite rank} of a function. It is well-known that every $\phi \in L^2\big(\R, \frac{1}{\sqrt{2\pi}}e^{-x^2/2}dx\big)$ can be expanded in a unique way in terms of the Hermite polynomials as follows.
\begin{equation} \label{phi-Hermite}
\phi(x) = \sum_{q=0}^\infty a_qH_q(x).
\end{equation}
We call $d$ the Hermite rank of $\phi$ if $d$ is the first integer $q \ge 0$ such that $a_q \neq 0$. We now state the Breuer-Major theorem.
\begin{thm} \label{thm-B-M}
Let $\{X_k\}_{k\ge 1}$ be a centered stationary Gaussion sequence, where each $X_k \sim \mathcal{N}(0,1)$, and let $\phi \in L^2\big(\R, \frac{1}{\sqrt{2\pi}}e^{-x^2/2}dx\big)$ be given by (\ref{phi-Hermite}). Assume that $a_0 = \E[\phi(X_1)] = 0$ and that $\sum_{k\in\Z}|\rho(k)|^d < \infty$, where $\rho$ is the covariance function of $\{X_k\}_{k\ge 1}$ and $d$ the Hermite rank of $\phi$. Let $V_n = \frac{1}{\sqrt{n}}\sum_{k=1}^n \phi(X_k)$. Then as $n \rightarrow \infty$, we have
\begin{equation}\label{conv-in-law}
\law(V_n) \rightarrow \mathcal{N}(0,\sigma^2)
\end{equation}
where $\sigma^2 \in [0,\infty)$ and is given by 
\begin{equation}\label{sigma^2}
\sigma^2 = \sum_{q=d}^\infty q!a_q^2\sum_{k\in \Z}\rho(k)^q.
\end{equation}
\end{thm}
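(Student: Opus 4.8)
The plan is to reduce the statement to the fourth moment theorem by expanding $V_n$ into Wiener chaos and treating each chaotic projection separately. First I would write each $X_k$ as $X_k = X(h_k)$ for a suitable collection $\{h_k\}$ in $L^2(\R_+)$ with $\langle h_j,h_k\rangle_{L^2(\R_+)} = \rho(j-k)$; this is possible by the isometry discussion in the Preamble, since $\{X_k\}$ is a centered stationary Gaussian family. Using the identity $I_q(h_k^{\otimes q}) = H_q(X(h_k))$ from (\ref{isometry1}) together with the Hermite expansion (\ref{phi-Hermite}), decompose
\begin{equation*}
V_n = \sum_{q=d}^\infty a_q\, I_q\!\left(\frac{1}{\sqrt n}\sum_{k=1}^n h_k^{\otimes q}\right) =: \sum_{q=d}^\infty V_{n,q},
\end{equation*}
where $V_{n,q}$ lives in the $q$th Wiener chaos. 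By orthogonality of the chaoses, $\Var(V_n) = \sum_q a_q^2 \Var(I_q(\cdots))$, and a direct computation with (\ref{ortho-relation}) gives $\Var(V_{n,q}) = a_q^2 q! \sum_{|k|<n}(1-|k|/n)\rho(k)^q \to a_q^2 q! \sum_{k\in\Z}\rho(k)^q$, which under the hypothesis $\sum_k|\rho(k)|^d<\infty$ (hence $\sum_k|\rho(k)|^q<\infty$ for all $q\ge d$) sums to the finite $\sigma^2$ of (\ref{sigma^2}).

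Next I would handle each fixed-order piece. For $q\ge d$, let $g_{n,q} = n^{-1/2}\sum_{k=1}^n h_k^{\otimes q}$ (symmetric), so $V_{n,q} = a_q I_q(g_{n,q})$. By Theorem \ref{4th-moment-thm} (the fourth moment theorem), after normalizing to unit variance it suffices to show that the contraction norms $\|g_{n,q}\widetilde\otimes_r g_{n,q}\|_{L^2(\R_+^{2q-2r})} \to 0$ for each $r=1,\dots,q-1$; equivalently, by (\ref{ExpF4}), that $\E[I_q(g_{n,q})^4] \to 3(\lim\Var I_q(g_{n,q}))^2$. The contraction $g_{n,q}\otimes_r g_{n,q}$ expands into $n^{-1}\sum_{j,k}\rho(j-k)^r\, h_j^{\otimes(q-r)}\otimes h_k^{\otimes(q-r)}$, and taking $L^2$ norms and using Cauchy--Schwarz produces a bound of the shape
\begin{equation*}
\|g_{n,q}\otimes_r g_{n,q}\|^2 \le \frac{1}{n}\sum_{|a|<n}|\rho(a)|^r \sum_{|b|<n}|\rho(b)|^{q-r}|\rho(b+a-\cdots)| \le C_q\,\Big(\sup_{|a|\ge 0}|\rho(a)|^{?}\Big)\cdots,
\end{equation*}
the standard Breuer--Major estimate: one factor of $\sum_k|\rho(k)|^d$ stays bounded while a residual factor tends to $0$ because $\rho(k)\to 0$ and $r,q-r\ge 1$ with $r+(q-r)=q\ge d$. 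I would carry out this estimate carefully, splitting the triple sum and using that $\ell^p$-summability of $(\rho(k))$ improves with the exponent. This shows $\law(V_{n,q}/\sqrt{\Var V_{n,q}}) \to \mathcal N(0,1)$, hence $\law(V_{n,q}) \to \mathcal N(0,\tau_q^2)$ with $\tau_q^2 = a_q^2 q!\sum_k\rho(k)^q$.

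Finally I would assemble the pieces. Truncate: write $V_n = \sum_{q=d}^{N} V_{n,q} + R_{n,N}$ with $\E[R_{n,N}^2] = \sum_{q>N}\Var(V_{n,q}) \le \sum_{q>N} a_q^2 q!\sum_k|\rho(k)|^q$, which is the tail of a convergent series and hence is small uniformly in $n$ for $N$ large. The finite sum $\sum_{q=d}^{N}V_{n,q}$ is a vector of multiple integrals of distinct orders; by the multivariate counterpart of the fourth moment theorem (or directly, since joint convergence of chaoses of different orders follows once each marginal converges and one checks the off-diagonal covariances vanish --- which they do by orthogonality), it converges jointly to independent Gaussians $\sum_{q=d}^N \mathcal N(0,\tau_q^2) = \mathcal N(0,\sum_{q\le N}\tau_q^2)$. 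Combining with the uniform smallness of $R_{n,N}$ via a standard $3\varepsilon$ argument (e.g. bounding a Wasserstein or Lipschitz-test-function distance) gives $\law(V_n)\to\mathcal N(0,\sigma^2)$ with $\sigma^2 = \sum_{q\ge d}\tau_q^2$, matching (\ref{sigma^2}). The main obstacle is the contraction estimate in the second paragraph: controlling $\|g_{n,q}\widetilde\otimes_r g_{n,q}\|$ requires a genuine combinatorial argument about sums of products of correlations, and it is here that the hypothesis $\sum_k|\rho(k)|^d<\infty$ (rather than mere decay of $\rho$) is essential.
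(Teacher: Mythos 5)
Your proposal follows essentially the same route as the paper's sketch: realize the $X_k$ as Wiener integrals via an isometry, use $H_q(X(h_k))=I_q(h_k^{\otimes q})$ to put each chaotic projection in the form $a_qI_q(n^{-1/2}\sum_k h_k^{\otimes q})$, apply the fourth moment theorem through vanishing contraction norms for each fixed order, combine finitely many orders via the Peccati--Tudor theorem, and control the tail $R_{n,N}$ uniformly in $n$ by the summability hypothesis. Both your write-up and the paper defer the same technical core (the combinatorial contraction estimate) to Nourdin's lecture notes, so the two arguments match in structure and in what they leave unproved.
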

The original proof of Theorem \ref{thm-B-M} uses the method of moments, by which one has to compute all the moments of $V_n$ and show that they converge to the corresponding moments of the limiting distribution. The fourth moment theorem offers a much simpler approach by which we only need to deal with the fourth moment of $V_n$. We will give a sketch of the proof here that applies the fourth moment theorem. A detailed proof can be found in Nourdin \cite{N12}. 

\begin{proof}

First we show that
\begin{equation}\label{4th-moment-V_n}
\Var(V_n) = \E[V_n^2] = \sum_{q=d}^\infty q!a_q^2\sum_{r\in \Z}\rho(r)^q(1 - \frac{|r|}{n})I(|r|< n).
\end{equation}
Since
$$
q!a_q^2|\rho(r)|^q(1 - \frac{|r|}{n})I(|r|< n)\le q!a_q^2|\rho(r)|^q \le q!a_q^2|\rho(r)|^d 
$$
and
$$
\sum_{q=d}^\infty\sum_{r\in\Z}q!a_q^2|\rho(r)|^d = \E[\phi^2(X_1)]\sum_{r\in\Z}|\rho(r)|^d < \infty,
$$
it follows by an application of the dominated convergence theorem that $\E[V_n^2]\rightarrow \sigma^2$, where $\sigma^2 \in [0,\infty)$ and is given by (\ref{sigma^2}). If $\sigma^2 = 0$, then there is nothing to prove. So we assume that $\sigma^2 > 0$. 
\medskip

The proof of (\ref{conv-in-law}) can be divided into three parts in increasing generality of $\phi$: (i) $\phi$ is a Hermite polynomial, (ii) $\phi$ is a real polynomial, and (iii) $\phi \in L^2\big(\R,\frac{1}{\sqrt{2\pi}}e^{-x^2/2}dx)$.  We sketch the proof of part (i). Let $\mathcal{H}$ be the real separable Hilbert space generated by $\{X_k\}_{k\ge 1}$ and let $\psi: \mathcal{H}\rightarrow L^2(\R_+)$ be an isometry. Define $h_k = \psi(X_k)$ for $k \ge 1$. Then we have
$$
\int_0^\infty h_k(x)h_l(x)dx = \E[X_kX_l] = \rho(k-l).
$$
Therefore 
$$
\law\{X_k: k \in \N\} = \law\big\{\int_0^\infty h_k(t)dB_t: k \in \N\big\},
$$
where $B = (B_t)_{t\ge0}$ is a standard Brownian motion. Note that for each $k \ge 1$, $\|h_k\|_{L^2(\R_+)}^2 = \E[X_k^2] = 1$. Since $\phi = H_q$ for some $q \ge 1$, we have
\begin{eqnarray*}
V_n = \frac{1}{\sqrt{n}}\sum_{k=1}^nH_q(X_k)&\stackrel{\law}{=}& \frac{1}{\sqrt{n}}\sum_{k=1}^nH_q\big(\int_0^\infty h_k(t)dB_t\big) \\
&=& \frac{1}{\sqrt{n}}\sum_{k=1}^n I_q(h_k^{\otimes q}) = I_q(f_{n,q})
\end{eqnarray*}
where 
$$
f_{n,q} = \frac{1}{\sqrt{n}}\sum_{k=1}^n h_k^{\otimes q}.
$$
It can be shown (see Nourdin \cite{N12} for details) that 
$\Vert f_{n,q} \widetilde{\otimes}_r f_{n,q} \Vert^2_{L^2(\R^{2k-2r}_+)} \rightarrow 0$ as $n \rightarrow \infty$ for $r = 1,\cdots,k-1$. By Theorem \ref{4th-moment-thm} and (\ref{ExpF4}) taking into account an appropriate scaling, part (i) is proved. Part (ii) follows from part (i) by writing a polynomial as a linear combination of Hermite polynomials and then applying a theorem of Peccati and Tudor \cite{PT05}, which concerns the equivalence between marginal and joint convergence in distribution of multiple Wiener-It\^o integrals to the normal distributions. For part (iii), write 
\begin{eqnarray*}
V_n &=& \frac{1}{\sqrt{n}}\sum_{k=1}^n \sum_{q=1}^N a_qH_q(X_k) + \frac{1}{\sqrt{n}}\sum_{k=1}^n \sum_{q=N+!}^\infty a_qH_q(X_k) \\
&=& V_{n,N} + R_{n,N}.
\end{eqnarray*}
Then apply part (ii) to $V_{n,N}$ and show that $\sup_{n\ge 1}\E[R_{n,N}^2] \rightarrow 0$ as $N \rightarrow \infty$. This completes the proof of Theorem \ref{thm-B-M}.
\end{proof}
Bounds on the rate of convergence in the Breuer-Major theorem have been obtained by Nourdin, Peccati and Podoskij \cite{NPP11}, who considered random variables of the form $S_n = \frac{1}{\sqrt{n}}\sum_{k=1}^n [f(X_k) - \E f(X_x)]$, $n \ge 1$, where $\{X_k\}_{k\in \Z}$ is a d-dimensional stationary Gaussian process and $f: \R^d \rightarrow \R$ a measurable function. They obtained explicit bounds on $|\E h(S_n)-\E h(S)|$, where $S$ is a normal random variable and $h$ a sufficiently smooth function. Their results both generalize and refine the Breuer-Major theorem and some other central limit theorems in the literature. The methods they used are based on Malliavin calculus, interpolation techniques and Stein's method.

\subsection{Quadratic variation of fractional Brownian motion}

In this subsection, we consider another application of Theorem \ref{4th-moment-thm} and also of Theorem \ref{optimal-4th-moment-thm}. Let $B^H = (B_t^H)_{t\ge0}$ be a fractional Brownian motion with Hurst index $H \in (0,1)$, that is, $B^H$ is a centered Gaussian process with covariance function given by
$$
\E[B_t^HB_s^H] = \frac{1}{2}(t^{2H}+s^{2H}- |t-s|^{2H}),\quad s,t \ge 0.
$$
This $B^H$ is self-similar of index $H$ and has stationary increments. 
\medskip

Consider the sum of squares of increments,
\begin{equation}\label{qvF}
F_{n,H} = \frac{1}{\sigma_n}\sum_{k=1}^n[(B_k^H - B_{k-1}^H)^2-1] = \frac{1}{\sigma^n}\sum_{k=1}^n H_2(B_k^H - B_{k-1}^H)
\end{equation}
where $H_2$ is the 2nd Hermite polynomial and $\sigma_n >0$ is such that $\E[F_{n,H}^2] =1$.
An application of the Breuer-Major theorem shows that for $0 < H \le \frac{3}{4}$,
$$
\law(F_{n,H}) \rightarrow \mathcal{N}(0,1) \quad \text{as}\quad n \rightarrow \infty.
$$
Nourdin and Peccati \cite{NP12} applied Theorem \ref{4th-moment-thm} to prove the following theorem which provides the rates of convergence for different values of the Hurst index $H$.
\begin{thm} \label{qv-bd-thm}
Let $F_{n,H}$ be as defined in (\ref{qvF}). Then
\begin{equation} \label{qv-bd}
d_{\mathrm{TV}}(F_{n,H},\mathcal{N}(0,1))\le c_H
\left\{\begin{array}{lrl}
\frac{1}{\sqrt{n}}&\text{if} & H\in(0,\frac 5 8)\\
\frac{(\log n)^{\frac 3 2}}{\sqrt{n}}&\text{if}& H=\frac 5 8\\
n^{4H-3}&\text{if}& H\in (\frac 5 8,\frac 3 4)\\
\frac {1}{\log n}&\text{if}& H=\frac 3 4.
\end{array}
\right.
\end{equation}
\end{thm}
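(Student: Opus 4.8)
The plan is to apply the fourth moment theorem, Theorem~\ref{4th-moment-thm}, since $F_{n,H}$ lives in the second Wiener chaos of $B$: by (\ref{qvF}) it is a linear combination of values of $H_2$, so $k=2$ and
\begin{equation*}
d_{\mathrm{TV}}(\law(F_{n,H}),\mathcal{N}(0,1)) \le 2\sqrt{\tfrac{1}{6}}\,\sqrt{\E[F_{n,H}^4]-3}.
\end{equation*}
Hence the whole problem reduces to estimating the fourth cumulant $\E[F_{n,H}^4]-3$, and it suffices to show it is at most $c_H^2$ times $n^{-1}$, $(\log n)^3 n^{-1}$, $n^{8H-6}$, and $(\log n)^{-2}$ in the four respective ranges of $H$. (For $H=\tfrac{1}{2}$ the increments are independent and this is the classical case.)

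First I would pass to the chaos representation. Realising the Gaussian space spanned by the increments $B_k^H-B_{k-1}^H$ inside $L^2(\R_+)$ by an isometry and writing $\varepsilon_k$ for the image of $B_k^H-B_{k-1}^H$, we have $\langle\varepsilon_k,\varepsilon_l\rangle=\rho(k-l)$ with $\rho(r)=\tfrac{1}{2}\big(|r+1|^{2H}+|r-1|^{2H}-2|r|^{2H}\big)$, and $F_{n,H}=I_2(f_n)$ with $f_n=\sigma_n^{-1}\sum_{k=1}^n\varepsilon_k^{\otimes 2}$; the normalisation $\E[F_{n,H}^2]=1$ forces $\sigma_n^2=2\sum_{|r|<n}(n-|r|)\rho(r)^2$. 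Specialising (\ref{ExpF4}) to $k=2$ (only the $r=1$ term survives, and here $f_n\otimes_1 f_n$ is symmetric) gives
\begin{equation*}
\E[F_{n,H}^4]-3 \;=\; 48\,\|f_n\otimes_1 f_n\|^2 \;=\; \frac{48}{\sigma_n^4}\sum_{i,j,k,l=1}^{n}\rho(i-j)\,\rho(j-l)\,\rho(l-k)\,\rho(k-i).
\end{equation*}
Everything then comes down to the denominator $\sigma_n^4$ --- for which the trivial bound $\sigma_n^2\ge 2n$ (the $r=0$ term) suffices except at $H=\tfrac{3}{4}$, where one uses $\sigma_n^2\asymp n\log n$, an immediate consequence of $\rho(r)^2\asymp|r|^{-1}$ --- and to the ``four-cycle'' sum $S_n$ in the numerator.

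The heart of the proof is the analysis of $S_n$, driven by the decay $\rho(r)\sim H(2H-1)|r|^{2H-2}$ as $|r|\to\infty$. Substituting $a=i-j$, $b=j-l$, $c=l-k$ and noting that $i,j,k,l\in\{1,\dots,n\}$ forces $|a|,|b|,|c|<n$, $|a+b+c|<n$ and leaves at most $n$ choices for the remaining free index, one gets $|S_n|\le n\,T_n$ with $T_n=\sum|\rho(a)\rho(b)\rho(c)\rho(a+b+c)|$ over that range. I would estimate $T_n$ by cases. If $H<\tfrac{5}{8}$ then $\rho\in\ell^{4/3}(\Z)$, and writing $T_n=\sum_a|\rho(a)|\big(|\rho|*|\rho|*|\rho|\big)(a)$ and invoking Young's inequality for convolutions together with H\"older (the exponents $4/3$ and $4$ being conjugate) gives $T_n\le\||\rho|\|_{4/3}^4<\infty$, so $\E[F_{n,H}^4]-3\lesssim n/\sigma_n^4\asymp n^{-1}$. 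If $H=\tfrac{5}{8}$ then $\rho\notin\ell^{4/3}$ but the truncated norm satisfies $\sum_{|r|<Cn}|\rho(r)|^{4/3}\asymp\log n$, so the same Young--H\"older estimate applied to $|\rho|\mathbf{1}_{\{|\cdot|<Cn\}}$ yields $T_n\lesssim(\log n)^3$ and $\E[F_{n,H}^4]-3\lesssim(\log n)^3 n^{-1}$. If $\tfrac{5}{8}<H<\tfrac{3}{4}$ then $T_n$ genuinely diverges, and here a scaling (Riemann-sum) comparison of $S_n$ with $n^{4}\,n^{4(2H-2)}\int_{[0,1]^4}|u-v|^{2H-2}|v-s|^{2H-2}|s-t|^{2H-2}|t-u|^{2H-2}\,du\,dv\,ds\,dt$ --- the integral being finite precisely because its singularity along the full diagonal $u=v=s=t$ is integrable, which holds iff $4(2H-2)+3>0$, i.e.\ iff $H>\tfrac{5}{8}$ --- gives $S_n\lesssim n^{8H-4}$, so $\E[F_{n,H}^4]-3\lesssim n^{8H-6}$ and the rate $n^{4H-3}$. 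Finally at $H=\tfrac{3}{4}$, where $|\rho(r)|\asymp|r|^{-1/2}$, one has $\sum_{|a|<n}|\rho(a)\rho(a+s)|\asymp\log\!\big(2n/(|s|+1)\big)$; iterating this gives $T_n\asymp\sum_{|s|<2n}\big(\log(2n/(|s|+1))\big)^2\asymp n$, hence $S_n\lesssim n^2$, and combined with $\sigma_n^4\asymp n^2(\log n)^2$ this gives $\E[F_{n,H}^4]-3\lesssim(\log n)^{-2}$ and the rate $1/\log n$.

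The main obstacle is precisely this estimate of the four-parameter sum $S_n$ in the two borderline cases and the intermediate range: pinning down the exact power of $\log n$ at $H=\tfrac{5}{8}$ and $H=\tfrac{3}{4}$, verifying that the normalised four-cycle integral converges for $\tfrac{5}{8}<H<\tfrac{3}{4}$ (the integrability of its diagonal singularity being exactly what singles out the exponent $5/8$), and extracting finite constants $c_H$, which necessarily blow up as $H$ approaches the boundary values. The remaining ingredients --- the reduction via Theorem~\ref{4th-moment-thm}, the chaos representation, and the tail behaviour of $\rho$ --- are routine; a cleaner bookkeeping of $S_n$ through iterated contractions of the sequence $\rho$ is carried out in Nourdin~\cite{N12}. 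The optimal fourth moment theorem (Theorem~\ref{optimal-4th-moment-thm}) is not needed for the upper bounds but would be invoked to show these rates cannot be improved.
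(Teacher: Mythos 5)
Your proposal is correct and follows essentially the same route as the paper's sketch: reduce to the fourth cumulant via Theorem \ref{4th-moment-thm} with $k=2$, pass to the chaos representation $F_{n,H}=I_2(f_n)$, use $\E[F_n^4]-3=48\|f_n\otimes_1 f_n\|^2$ expressed as the cyclic sum in $\rho$, and control that sum by the Young--H\"older bound $\E[F_n^4]-3\le 48 n\,\sigma_n^{-4}\bigl(\sum_{|k|<n}|\rho(k)|^{4/3}\bigr)^3$ together with the asymptotics of $\sigma_n^2$ and of the truncated $\ell^{4/3}$ norm, which is exactly the paper's chain (\ref{ExpF4-3})--(\ref{rho-limit}). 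Your alternative treatment of the range $\tfrac58<H<\tfrac34$ and of $H=\tfrac34$ by direct scaling of the four-cycle sum gives the same rates and is only a cosmetic variation.
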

\begin{proof}
We will give a sketch of the proof in Nourdin \cite{N12}. Consider the closed linear subspace $\mathcal{H}$ of $L^2(\Omega)$ generated by $(B_{k}^H)_{k\in \N}$.
As it is a real separable Hilbert space, there exists an isometry $\psi: \mathcal{H} \rightarrow L^2(\R_+)$. For any $k \in \N$, define $h_k = \psi(B_k^H-B_{k-1}^H)$. Then for $k,l\in \N$, we have
\begin{equation}\label{isometry}
\int_0^\infty h_k(x)h_l(x)dx = \E[(B_k^H-B_{k-1}^H)(B_l^H-B_{l-1}^H)] = \rho(k-l)
\end{equation}
where
\begin{equation}\label{rho}
\rho(r) = \frac{1}{2}(|r+1|^{2H}+|r-1|^{2H}-2|r|^{2H}).
\end{equation}
Therefore
$$
\law\{B_k^H-B_{k-1}^H:k\in \N\}= \law \Big\{\int_0^\infty h_k(t)dB_t: k\in \N \Big\}
$$
where $B=(B_t)_{t\ge 0}$ is a standard Brownian motion. Consequently, without loss of generality, we can regard $F_{n,H}$ as
\begin{equation}\notag
F_n = \frac{1}{\sigma_n}\sum_{k=1}^n H_2\big(\int_0^\infty h_k(t)dB_t\big).
\end{equation}
Since for $k \in \N$, $\|h_k\|_{L^2(\R_+)}^2 = \rho(0) = 1$ (by (\ref{isometry}) and (\ref{rho})), we have 
\begin{equation}
F_n = \frac{1}{\sigma_n}\sum_{k=1}^n I_2(h_k\otimes h_k)= I_2(f_n)
\end{equation}
where $I_p$, $p \ge 1$, is the $p$th multiple Wiener-It\^o integral with respect to $B$, and 
$$
f_n = \frac{1}{\sigma_n}\sum_{k=1}^nh_k\otimes h_k.
$$

Now straightforward calculations yield
$$
\sigma_n^2 = 2 \sum_{k,l=1}^n \rho^2(k-l) = 2 \sum_{|r|<n}(n-|r|)\rho^2(r).
$$
It can be shown that for $H< \frac{3}{4}$, we have $\sum_{r \in \Z}\rho^2(r) < \infty$, and 
\begin{equation}\label{var-limit1}
\lim_{n \rightarrow \infty} \frac{\sigma^2_n}{n} = 2\sum_{r \in \Z}\rho^2(r),
\end{equation}
and for $H=\frac{3}{4}$, we have
\begin{equation}\label{var-limit2}
\lim_{n \rightarrow \infty} \frac{\sigma^2_n}{n\log n}=\frac{9}{16}.
\end{equation}

Now we come to calculating the bound $\sqrt{\E[F_n^4] - 3}$ in Theorem \ref{4th-moment-thm}. We first note that $f_n$ is symmetric, and so $f_n\widetilde{\otimes}f_n = f_n\otimes f_n$. Therefore, by (\ref{ExpF4}), we have
\begin{eqnarray} \label{ExpF4-3} \notag
\E[F_n^4] - 3 &=& 48\|f_n \widetilde{\otimes}_1f_n\|_{L^2(\R_+^2)}^2 \\ \notag
&=& 48\|f_n \otimes_1 f_n\|_{L^2(\R_+^2)}^2 \\ 
&=& \frac{48}{\sigma_n^4}\sum_{i,j,k,l=1}^n\rho(k-l)\rho(i-j)\rho(k-i)\rho(l-j).
\end{eqnarray}
By bounding the extreme right of (\ref{ExpF4-3}) (see Nourdin \cite{N12} for details), we obtain
\begin{equation}\label{ExpF4-3-bd}
\E[F_n^4]-3 \le \frac{48n}{\sigma_n^4}\left(\sum_{|k|<n}|\rho(k)|^{\frac{4}{3}}\right)^3.
\end{equation}
From the asymptotic behavior of $\rho(k)$ as $|k| \rightarrow \infty$, we can show that
\begin{equation}\label{rho-limit}
\sum_{|k|<n}|\rho(k)|^{\frac{4}{3}} = 
\left\{\begin{array}{lrl}
O(1)&\text{if} & H\in(0,\frac 5 8)\\ 
O(\log n)&\text{if}& H=\frac 5 8\\
O(n^{(8H-5)/3} &\text{if}& H\in (\frac 5 8,1).
\end{array}\right.
\end{equation}
This, together with (\ref{var-limit1}) and (\ref{ExpF4-3-bd}), implies
\begin{equation}\notag
\sqrt{\E[F_n^4] - 3} \le c_H
\left\{\begin{array}{lrl}
\frac{1}{\sqrt{n}}&\text{if} & H\in(0,\frac 5 8)\\
\frac{(\log n)^{3/2}}{\sqrt{n}}&\text{if}& H=\frac 5 8\\
n^{(4H-3)} &\text{if}& H\in (\frac 5 8,\frac{3}{4}).
\end{array}\right.
\end{equation}

For $H=\frac{3}{4}$, combining (\ref{var-limit2}), (\ref{ExpF4-3-bd}) and (\ref{rho-limit}) gives
\begin{equation} \notag
\sqrt{\E[F_n^4] - 3} = O\left(\frac{1}{\log n}\right).
\end{equation}

This proves Theorem \ref{qv-bd-thm}
\end{proof}

In Nourdin and Peccati \cite{NP13}, the bounds in (\ref{optimal-4th-moment-bd}) are applied to obtain the following improvement of (\ref{qv-bd}) for $H\in(0,\frac{3}{4})$.

\begin{thm}
Let $F_{n,H}$ be as defined in (\ref{qvF}). Then
\begin{equation} \notag
d_{\mathrm{TV}}(F_{n,H},\mathcal{N}(0,1))\propto
\left\{\begin{array}{lrl}
\frac{1}{\sqrt{n}}&\text{if} & H\in(0,\frac{2}{3})\\
\frac{(\log n)^2}{\sqrt{n}}&\text{if}& H=\frac{2}{3}\\
n^{6H-\frac{9}{2}}&\text{if}& H\in (\frac{2}{3},\frac{3}{4}).
\end{array}
\right.
\end{equation}
where for nonnegative sequences $(u_n)$ and $(v_n)$, we write $v_n \propto u_n$ to mean $0 < \liminf v_n/u_n \le \limsup v_n/u_n < \infty$.
\end{thm}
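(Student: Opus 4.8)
The plan is to deduce the theorem from the optimal fourth moment theorem (Theorem~\ref{optimal-4th-moment-thm}) by pinning down the exact orders of the third and fourth cumulants of $F_{n,H}$. Since $H<3/4$, Theorem~\ref{qv-bd-thm} (or the Breuer--Major theorem) already supplies $F_{n,H}\to Z\sim\mathcal{N}(0,1)$ in distribution, and $\E[F_{n,H}^2]=1$ by the choice of $\sigma_n$, so Theorem~\ref{optimal-4th-moment-thm} applies with Wiener chaos of order $k=2$ and gives
\[
d_{\mathrm{TV}}(F_{n,H},\mathcal{N}(0,1))\propto M(F_{n,H})=\max\bigl\{\E[F_{n,H}^4]-3,\ |\E[F_{n,H}^3]|\bigr\}.
\]
It therefore suffices to find the exact order of each of the two terms, using the representation $F_{n,H}=I_2(f_n)$ with $f_n=\sigma_n^{-1}\sum_{k=1}^n h_k\otimes h_k$, $\langle h_k,h_l\rangle_{L^2(\R_+)}=\rho(k-l)$, $\sigma_n^2=2\sum_{|r|<n}(n-|r|)\rho(r)^2$, the asymptotics $\rho(r)\sim H(2H-1)|r|^{2H-2}$ as $|r|\to\infty$, and the fact that $\sigma_n^2\asymp n$ for $H<3/4$.

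For the fourth cumulant I would reuse the identity already obtained in the proof of Theorem~\ref{qv-bd-thm},
\[
\E[F_{n,H}^4]-3=48\,\|f_n\otimes_1 f_n\|_{L^2(\R_+^2)}^2=\frac{48}{\sigma_n^4}\sum_{i,j,k,l=1}^n\rho(k-l)\rho(i-j)\rho(k-i)\rho(l-j),
\]
but now evaluate the multi-index sum \emph{sharply} rather than bounding it crudely as in (\ref{ExpF4-3-bd}); a convolution/Toeplitz-type computation gives $\E[F_{n,H}^4]-3\asymp n^{-1}$ for $H$ small, a logarithmic correction to $n^{-1}$ at $H=5/8$, and genuine power decay $\asymp n^{8H-6}$ for $H\in(5/8,3/4)$. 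For the third cumulant I would first derive, from the Stein identity (\ref{Stein-id-inner-prod}) applied to $f(x)=x^2$, the chaos expansion (\ref{DF-norm2}) of $\|DF\|^2$, and the orthogonality relation (\ref{ortho-relation}) (equivalently, straight from the product formula (\ref{prodfor})), that
\[
\E[F_{n,H}^3]=8\,\langle f_n,\,f_n\otimes_1 f_n\rangle_{L^2(\R_+^2)}=\frac{8}{\sigma_n^3}\sum_{i,j,k=1}^n\rho(i-j)\rho(j-k)\rho(k-i),
\]
and then perform the analogous, more delicate, asymptotics of this triangular sum. The expected outcome is $|\E[F_{n,H}^3]|\asymp n^{-1/2}$ for $H\in(0,2/3)$, $|\E[F_{n,H}^3]|\asymp n^{-1/2}(\log n)^2$ at $H=2/3$, and $|\E[F_{n,H}^3]|\asymp n^{6H-9/2}$ for $H\in(2/3,3/4)$.

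Finally I would compare the two orders: an elementary check of exponents shows $\E[F_{n,H}^4]-3=o(|\E[F_{n,H}^3]|)$ throughout $(0,3/4)$, so $M(F_{n,H})\asymp|\E[F_{n,H}^3]|$, and the claimed rates follow immediately from the two-sided bound in Theorem~\ref{optimal-4th-moment-thm}. The hard part is the sharp evaluation of the triangular sum $\sum_{i,j,k=1}^n\rho(i-j)\rho(j-k)\rho(k-i)$: one must locate the critical Hurst exponent $H=2/3$, where the sum passes from order $n$ to order $n(\log n)^2$ and then to genuinely super-linear order, and identify the correct power of $\log n$ at the threshold. This is cleanest to handle by a dyadic decomposition of the index cube $\{1,\dots,n\}^3$, or equivalently by analysing the singularity of the spectral symbol $\widehat\rho(\theta)\asymp|\theta|^{1-2H}$ of $\rho$ near $\theta=0$ together with the Toeplitz trace $\mathrm{tr}(C_n^3)$ for $C_n=(\rho(i-j))_{1\le i,j\le n}$. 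A secondary point is to ensure that no cancellation spoils the \emph{lower} bound: one uses $\rho(r)\ge0$ for all $r$ when $H\ge1/2$, while for $H<1/2$ the triangular sum is dominated by its near-diagonal terms, where positivity still holds.
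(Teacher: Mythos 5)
The first thing to say is that the paper does not prove this theorem at all: it is stated as a quotation from Nourdin and Peccati \cite{NP13}, immediately after Theorem \ref{qv-bd-thm}, with no argument supplied. So there is no in-paper proof to compare against, and your proposal must be judged on its own merits against the argument in the cited source. Your overall strategy is the intended one: invoke the two-sided bound (\ref{optimal-4th-moment-bd}), reduce everything to sharp two-sided asymptotics of $\kappa_3(F_n)=\E[F_n^3]$ and $\kappa_4(F_n)=\E[F_n^4]-3$, and take the maximum. Your cumulant identities are correct: the product formula gives $F_n^2=I_4(f_n\widetilde{\otimes}f_n)+4I_2(f_n\otimes_1 f_n)+\E[F_n^2]$, whence $\E[F_n^3]=8\langle f_n,f_n\otimes_1 f_n\rangle_{L^2(\R_+^2)}=\frac{8}{\sigma_n^3}\sum_{i,j,k=1}^n\rho(i-j)\rho(j-k)\rho(k-i)$, and your exponent bookkeeping ($8H-6<6H-\tfrac92$ for $H<\tfrac34$) correctly shows that the third cumulant dominates the fourth throughout $(0,\tfrac34)$, so for $\kappa_4$ only an upper bound is needed and that is already available from (\ref{ExpF4-3-bd}) and (\ref{rho-limit}).

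The gap is that the entire quantitative content of the theorem --- the two-sided evaluation of $T_n:=\sum_{i,j,k\le n}\rho(i-j)\rho(j-k)\rho(k-i)$ --- is asserted rather than carried out, and two of the assertions need genuine work. First, the lower bounds: for $H<\tfrac12$ one has $\rho(r)<0$ for all $r\neq 0$, so the summand changes sign and ``near-diagonal positivity'' does not by itself give $\liminf \sqrt{n}\,|\kappa_3(F_n)|>0$; the clean route is the Toeplitz/spectral representation $T_n\sim \frac{n}{2\pi}\int_{-\pi}^{\pi}g(\theta)^3\,d\theta$ with $g\ge 0$ the (nontrivial) spectral density of the increment sequence. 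Second, and more seriously, the critical case $H=\tfrac23$: the $(\log n)^2$ you announce is exactly the order of the Young-convolution upper bound $n\bigl(\sum_{|k|\le 2n}|\rho(k)|^{3/2}\bigr)^2$, which is critical at $H=\tfrac23$ because $|\rho(k)|^{3/2}\asymp|k|^{3H-3}=|k|^{-1}$; but Young's inequality is lossy at the critical exponent, and a direct evaluation of the (termwise positive, since $H>\tfrac12$) sum gives $\sum_{s}|s|^{-2/3}|r+s|^{-2/3}\asymp|r|^{-1/3}$, hence $\sum_{r,s}\rho(r)\rho(s)\rho(r+s)\asymp\sum_{r}|r|^{-1}\asymp\log n$ and $T_n\asymp n\log n$, i.e.\ $|\kappa_3(F_n)|\asymp n^{-1/2}\log n$ --- one power of the logarithm, not two. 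Your dyadic decomposition will surface exactly this tension, and until you either exhibit a matching lower bound of order $n(\log n)^2$ or conclude that the sharp rate at $H=\tfrac23$ carries a single logarithm, the claimed two-sided statement at the critical index is not established. That is the one step where the proposal is a plan rather than a proof.
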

For $H>\frac 3 4$, $F_{n,H}$ does not converge to a Gaussian distribution. Instead, it converges to the so-called Rosenblatt distribution, which belongs to the second Wiener chaos and is therefore not Gaussian.
\medskip

The expository paper by Nourdin \cite{N12}, the survey paper by Peccati \cite{P14} with an emphasis on more recent results, and the book by Nourdin and Peccati \cite{NP12}, cover many topics and give detailed development of this new area of normal approximation.

\section{Acknowlegment}
I would like to thank Ivan Nourdin for some very helpful discussions during the course of writing this paper and for reading the drafts of this paper and giving very helpful comments. This work is partially supported by Grant C-146-000-034-001 and Grant R-146-000-182-112 from the National University of Singapore.

%
%\begin{equation}\label{bound1}
%\sqrt{\Var[\E (\langle DF_n,-D\LL^{-1}F_n\rangle_\HH]}\le C \frac{1}{n^{4H-3}},
%\end{equation}
%and if $H=\frac 3 4$, then 
%
%\begin{equation}\label{bound2}
%\sqrt{\Var[\E (\langle DG_n,-D\LL^{-1}G_n\rangle_\HH]}\le C \frac{1}{\log n}.
%\end{equation}
%
%It is remarkable that the bounds (\ref{bound1}) and (\ref{bound2}), combined with the bound (\ref{boundtvwiener}) and the fact that $\Var[\E (\langle DF,-D\LL^{-1}F\rangle_\HH | F)] \le
%\Var[\E (\langle DF,-D\LL^{-1}F\rangle_\HH]$, provides the optimal rates of convergence for the total variation distance, see (Nourdin Peccati\cite{NP13}). This suggests that $\sqrt{\Var[\E (\langle DF,-D\LL^{-1}F\rangle_\HH]}$ is the right quantity for controlling asymptotic normality for many non-linear functionals of an isonormal Gaussian process.

\end{document}